\documentclass{article}
\usepackage[pdftex]{hyperref}
\usepackage{tikz-cd, url}

\hypersetup{
  colorlinks   = true, 
  urlcolor     = gray, 
  linkcolor    = red, 
  citecolor   =  blue
}
\usepackage{amsmath,amsfonts,amsthm,amssymb,graphicx,xcolor}
\usepackage[all,2cell,ps]{xy}

\bibliographystyle{plain}

\theoremstyle{plain}
\newtheorem{thm}{Theorem}[section]
\newtheorem{lem}[thm]{Lemma}
\newtheorem{prop}[thm]{Proposition}
\newtheorem{cor}[thm]{Corollary}
\newtheorem{conj}[thm]{Conjecture}

\theoremstyle{definition}

\newtheorem{qtn}[thm]{Question}
\newtheorem{rem}[thm]{Remark}

\setlength{\parindent}{22pt}
\setlength{\parskip}{2ex}
\newcommand{\Z}{\mathbb Z}

\newcommand\sO{{\mathcal O}}

\DeclareMathOperator{\Alb}{Alb}


\usepackage{mathtools}

\DeclarePairedDelimiter\abs{\lvert}{\rvert}
\DeclarePairedDelimiter\norm{\lVert}{\rVert}

\makeatletter
\let\oldabs\abs
\def\abs{\@ifstar{\oldabs}{\oldabs*}}
\let\oldnorm\norm
\def\norm{\@ifstar{\oldnorm}{\oldnorm*}}
\makeatother

\newenvironment{pf}{\begin{proof}}{\end{proof}}


\title{Aspherical Complex Surfaces, the Singer Conjecture, and Gromov-L\"uck Inequality $\chi\geq |\sigma|$}
 \author{\small{Michael Albanese}\\ \scriptsize{University of Waterloo} \\ \footnotesize{\textsf{m3albanese@uwaterloo.ca}} \and \small{Luca F. Di Cerbo}\footnote{Supported in part by NSF grant DMS-2104662} \\ \scriptsize{University of Florida} \\ \footnotesize{\textsf{ldicerbo@ufl.edu}} \and 
\small{Luigi Lombardi}\footnote{Partially supported by GNSAGA of INDAM, PRIN 2020: Curves, Ricci flat Varieties and their Interactions} \\ 
\scriptsize{Università degli Studi di Milano Statale}\\ \footnotesize{\textsf{luigi.lombardi@unimi.it}}}
\date{}
	
\begin{document}

\maketitle

\begin{abstract}
We discuss the Singer conjecture and Gromov-L\"uck inequality $\chi\geq |\sigma|$ for aspherical complex surfaces. We give a proof of the Singer conjecture for aspherical complex surface with residually finite fundamental group that does not rely on Gromov's K\"ahler groups theory. Without the residually finiteness assumption, we observe that this conjecture can be proven for all aspherical complex surfaces except possibly those in Class $\mathrm{VII}_0^+$ (a positive answer to the global spherical shell conjecture would rule out the existence of aspherical surfaces in this class). We also sharpen Gromov-L\"uck inequality for aspherical complex surfaces that are not in Class $\mathrm{VII}_0^+$. This is achieved by connecting the circle of ideas of the Singer conjecture with the study of Reid's conjecture. 
\end{abstract}
\vspace{8cm}
\tableofcontents\quad\\

\vspace{1cm}

\section{Introduction and Main Results}

A significant part of modern Riemannian geometry deals with the interaction between curvature and topology of smooth manifolds. As beautifully recounted in Marcel Berger's panoramic book on Riemannian geometry (see in particular Chapter 12 in \cite{Berger}), Heinz Hopf was the first to investigate the connections between topology and curvature in a general and systematic way.  Surprisingly, some of the questions that Hopf posed in the 1930's remain unanswered. A well-known example is the following problem on the sign of the Euler characteristic of aspherical manifolds. 

\begin{conj}[Hopf Conjecture]\label{Hopf}
	If $X$ is a closed aspherical manifold of dimension $2n$, then:
	\begin{equation*}
	(-1)^n\chi_{top}(X)\geq 0.
	\end{equation*}
	
\end{conj}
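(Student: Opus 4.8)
The plan is to reduce the Hopf inequality to a statement about $L^2$-Betti numbers via Atiyah's $L^2$-index theorem, and then to establish that statement — the Singer conjecture — on the universal cover. Concretely, let $\widetilde X \to X$ be the universal cover and let $b_k^{(2)}(\widetilde X)$ denote the $k$-th $L^2$-Betti number computed with respect to the deck action of $\pi_1(X)$. Atiyah's formula gives the $L^2$-Euler characteristic identity
\begin{equation*}
\chi_{top}(X) = \sum_{k=0}^{2n} (-1)^k b_k^{(2)}(\widetilde X),
\end{equation*}
which holds for any closed manifold and uses neither asphericity nor the dimension being even. The entire content of the Hopf inequality is therefore transferred to controlling the signs of the individual summands, and asphericity will enter only through the vanishing we invoke next.

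First I would apply the Singer conjecture to $X$: since $X$ is closed and aspherical of dimension $2n$, one expects $b_k^{(2)}(\widetilde X) = 0$ for every $k \neq n$. Granting this, every term in the alternating sum except the middle one vanishes, and the identity collapses to
\begin{equation*}
\chi_{top}(X) = (-1)^n b_n^{(2)}(\widetilde X).
\end{equation*}
Because $L^2$-Betti numbers are nonnegative von Neumann dimensions, $b_n^{(2)}(\widetilde X) \geq 0$, and multiplying through by $(-1)^n$ yields exactly $(-1)^n \chi_{top}(X) \geq 0$. Thus the Hopf conjecture follows verbatim from the Singer conjecture, and the genuine work is the vanishing of the off-middle $L^2$-Betti numbers. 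I would also record the Poincar\'e duality symmetry $b_k^{(2)}(\widetilde X) = b_{2n-k}^{(2)}(\widetilde X)$, which is automatic and halves the work: it suffices to kill the degrees $k < n$.

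The hard part — indeed the reason the statement is still a conjecture — is precisely that the Singer conjecture is open in this generality, and there is no known uniform mechanism forcing $b_k^{(2)}(\widetilde X)$ to vanish away from the middle degree for an arbitrary closed aspherical $2n$-manifold. I would therefore not expect a single argument in all dimensions, but would instead exploit whatever geometry the universal cover carries: nonpositive curvature, a locally symmetric structure, or a complex/K\"ahler structure (as in the surface case $n=2$ treated in this paper), where Hodge-theoretic and curvature arguments can force the requisite vanishing. A purely curvature-theoretic alternative — writing $\chi_{top}(X)$ as the integral of the Pfaffian of the curvature via Chern--Gauss--Bonnet and arguing that nonpositive sectional curvature fixes its sign — does succeed in dimension four but is known to break down from dimension six onward, since the Pfaffian of a nonpositively curved operator need not be sign-definite. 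This failure is exactly why I would route the proof through $L^2$-cohomology and the Singer conjecture rather than through the Gauss--Bonnet integrand.
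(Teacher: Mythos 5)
There is a genuine gap, and it is unavoidable: the statement you were asked to prove is a \emph{conjecture}, and your argument does not prove it --- it reduces it to the Singer conjecture, which is itself open. The reduction you give is correct in every step: Atiyah's $L^2$-index theorem does give $\chi_{top}(X)=\sum_k(-1)^k b^{(2)}_k(X;\widetilde{X})$ for any closed manifold, $L^2$-Betti numbers are nonnegative von Neumann dimensions, $L^2$-Poincar\'e duality holds, and granting $b^{(2)}_k=0$ for $k\neq n$ one gets $(-1)^n\chi_{top}(X)=b^{(2)}_n\geq 0$. This is precisely the implication the paper records in its introduction (``an affirmative solution to Conjecture 1.2 would also settle Conjecture 1.1''), and your closing remark about Chern--Gauss--Bonnet is also accurate: the Pfaffian sign argument works in dimension four but fails from dimension six on. But the paper offers no proof of Conjecture 1.1 because none exists: it is open already for $n=2$, even assuming nonpositive curvature. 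Your ``first I would apply the Singer conjecture'' is the entire content of the problem, as you yourself concede, so as a proof attempt this stops exactly where the difficulty begins.

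It is worth noting that in the setting the paper actually treats --- aspherical \emph{complex} surfaces, i.e.\ $n=2$ with a complex structure --- the Hopf inequality is known \emph{unconditionally}, and by a route entirely different from the one you propose: Johnson--Kotschick prove $\chi_{top}(X)\geq|\sigma(X)|$ for every compact complex surface except ruled surfaces over curves of genus $g\geq 2$, and ruled surfaces are not aspherical; since $|\sigma|\geq 0$, Hopf follows for this class without any $L^2$ input. The paper's Theorem 1.4 then sharpens this to $\chi_{top}\geq\frac{9}{5}|\sigma|$ for aspherical surfaces of general type via the Kodaira--Enriques classification and Reid's conjecture (through the Horikawa--Reid bound $K_X^2\geq 3\chi_{hol}$). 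By contrast, the paper's $L^2$-theoretic results in the direction of Singer (Theorems 1.5 and 4.2) remain conditional: they require residual finiteness of $\pi_1$ or the exclusion of Class $\mathrm{VII}_0^+$ surfaces. So if your goal were an unconditional theorem rather than a conditional reduction, the lesson of the paper is that classification and geography arguments currently reach further than the $L^2$ route you chose.
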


Thanks to the uniformization theorem for Riemann surfaces, Conjecture \ref{Hopf} is true when $n=1$. On the other hand, this problem is still completely open when $n=2$, but if $X^4$ is a closed, non-positively curved $4$-manifold, then John Milnor proved that Conjecture \ref{Hopf} is indeed true in this case, see \cite[Chapter 12, Note 12.3.1.1]{Berger}. There are several families of closed aspherical $4$-manifolds which do not admit non-positively curved metrics. For example, non-flat nilmanifolds cannot admit such a metric by \cite[Corollary A]{Yau71} -- of course, these have Euler characteristic zero, so the Hopf conjecture holds for them.

During the 1970's, Isadore Singer suggested an approach to Conjecture \ref{Hopf} via the study of $L^2$-harmonic forms on the topological universal cover of $X$. Taking into account Atiyah's $L^2$-index theorem \cite{Atiyah}, he proposed the following.

\begin{conj}[Singer Conjecture]\label{Singer}
	If $X$ is a closed aspherical manifold of real dimension $2n$, then the $L^2$-Betti numbers are:
	\begin{equation*}
	b^{(2)}_{k}(X; \widetilde{X})=\begin{cases} 
	(-1)^n \chi_{top}(X)  &\mbox{if }\quad    k =  n  \\
	0 & \mbox{if }\quad   k  \neq n
	\end{cases}
	\end{equation*}
	where $\pi \colon \widetilde{X}\rightarrow X$ is the topological universal cover of $X$.
\end{conj}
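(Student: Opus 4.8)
The general conjecture is open, so my plan targets the case the paper is concerned with: a closed aspherical \emph{complex surface} $X$, of real dimension four (so $n=2$). The first move is to collapse the whole statement to a single vanishing. Since $\widetilde X$ is contractible and $X$ is closed of positive dimension, $\Gamma:=\pi_1(X)$ is infinite, whence $b^{(2)}_0(X;\widetilde X)=0$; by $L^2$-Poincar\'e duality $b^{(2)}_0=b^{(2)}_4$ and $b^{(2)}_1=b^{(2)}_3$. Plugging this into Atiyah's $L^2$-index theorem \cite{Atiyah}, which gives $\chi_{top}(X)=\sum_k(-1)^k b^{(2)}_k(X)=b^{(2)}_2(X)-2b^{(2)}_1(X)$, one sees that Conjecture \ref{Singer} for $X$ is \emph{equivalent} to the single statement $b^{(2)}_1(X;\widetilde X)=0$: the values $b^{(2)}_2=\chi_{top}$ and $(-1)^n\chi_{top}=\chi_{top}\ge 0$ then come for free. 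So everything reduces to the vanishing of the first $L^2$-Betti number.

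For the residually finite case I would avoid all K\"ahler-group machinery and instead invoke L\"uck's approximation theorem: fixing a cofinal chain of finite-index normal subgroups with trivial intersection, $b^{(2)}_1(X)=\lim_i b_1(X_i)/[\Gamma:\Gamma_i]$, where $X_i\to X$ is the corresponding finite cover (again an aspherical complex surface) of degree $d_i=[\Gamma:\Gamma_i]$. Since the first Betti number of a compact complex surface equals $2q$ up to a bounded non-K\"ahler defect, the problem becomes the geometric estimate that the irregularity grows sublinearly, $q(X_i)=o(d_i)$. Here I would split by Kodaira dimension. When $\kappa(X)\le 1$ asphericity is very restrictive: discarding the surfaces containing rational curves (which carry a nontrivial $\pi_2$), what remains are tori, bielliptic and Kodaira surfaces and elliptic fibre bundles, whose fundamental groups all carry an infinite amenable normal subgroup (virtually $\Z^2$ from the fibre, or the nilpotent/solvable lattice), so Cheeger--Gromov forces \emph{all} $b^{(2)}_k$ to vanish; consistently, $\chi_{top}=0$ in each of these cases. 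The substantive case is $\kappa(X)=2$, where $\chi_{top}>0$ and no such amenable subgroup exists, and the sublinear growth of $q$ must be proven directly.

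Without residual finiteness I would observe that the same reduction, combined with two classical inputs, settles every aspherical surface except one class. If $X$ is K\"ahler then $b_1$ is even; here $X$ is K\"ahler and aspherical (and every surface of general type is projective, hence falls into this regime), so Gromov's K\"ahler-groups theory yields $b^{(2)}_k=0$ for $k\ne 2$, in particular $b^{(2)}_1=0$. If $X$ is non-K\"ahler then $b_1$ is odd; apart from Class $\mathrm{VII}$ these are the primary/secondary Kodaira surfaces and the non-K\"ahler properly elliptic surfaces, whose groups again contain an infinite amenable normal subgroup, so Cheeger--Gromov applies. What is left is exactly Class $\mathrm{VII}_0^+$, and I would point out that a positive answer to the global spherical shell conjecture would produce a spherical shell, hence a nontrivial $\pi_2$, and thereby show that no surface in this class is aspherical at all, removing the case rather than computing its $L^2$-invariants.

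I expect two distinct obstacles. The conceptual one is Class $\mathrm{VII}_0^+$: non-K\"ahler with $b_2>0$, with no amenable normal subgroup and no fibration to exploit, and not known either to admit or to be excluded from admitting an aspherical structure; nothing short of the GSS conjecture appears to dislodge it. The technical one, in the residually finite route, is the sublinear estimate $q(X_i)=o(d_i)$ for surfaces of general type. I would attack it through the Albanese morphism and the Castelnuovo--de Franchis description of irrational pencils: linear growth of $q$ along a tower can only arise from fibrations onto higher-genus curves, and for a four-dimensional Poincar\'e duality group such pencils feed $b^{(2)}_2$ rather than $b^{(2)}_1$, as the K\"unneth computation for a product of curves already shows, so the normalized first Betti numbers should tend to zero. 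Making this uniform over the entire tower, rather than for a single pencil, is the delicate point.
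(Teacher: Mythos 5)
Your overall architecture---reduction to $b^{(2)}_1(X;\widetilde{X})=0$ via Poincar\'e duality and Atiyah's index theorem, the split by Kodaira dimension, Cheeger--Gromov amenability for Kodaira dimension $0$ and $1$, Gromov's K\"ahler-groups theorem in the non-residually-finite route, and the identification of Class $\mathrm{VII}_0^+$ as the irreducible obstruction---coincides with the paper's (Theorems \ref{main3}, \ref{ksurfaces}, \ref{csurfaces}). But in the residually finite route, which is the paper's actual contribution, you leave precisely the two load-bearing steps unproven. First, the general type case: your Castelnuovo--de Franchis/K\"unneth heuristic (``pencils feed $b^{(2)}_2$ rather than $b^{(2)}_1$'') is not the mechanism, and you yourself flag the uniform estimate as missing. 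The paper's Proposition \ref{Albanese1} supplies it by a different argument: when the Albanese images $S_i=a_{X_i}(X_i)$ are curves, the maps $a_{\pi_i}\colon S_i\to S$ branch only over the (finitely many) critical values of $a_X$ because the covers $\pi_i$ are unramified, so Riemann--Hurwitz gives $b_1(X_i)\leq (2\chi(\omega_S)+C+2)\deg(a_{\pi_i})$; the punchline is then that $\deg\pi_i\geq k_i\cdot\deg(a_{\pi_i})$, where $k_i$, the degree of $\pi_i$ on a general Albanese fiber, tends to infinity---this is \emph{not} soft, it uses the Seshadri-constant result \cite[Proposition 3.3]{DD19} together with largeness of $\pi_1$ for aspherical projective varieties (\cite[Proposition 6.7]{LMW17b}, \cite[Proposition 2.12.1]{Ko}). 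Moreover your sketch never addresses towers in which the Albanese image becomes a surface, i.e.\ $a_{X_i}$ generically finite (semismall) onto its image; the paper disposes of that case by citing \cite[Theorem 1.3]{DL23}, a separate theorem, not by any pencil argument.

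Second, your residually finite case analysis for $\kappa(X)\leq 1$ has a hole exactly at Class $\mathrm{VII}_0^+$: a hypothetical aspherical such surface contains no rational curves, is not a torus, bi-elliptic, Kodaira, or elliptic bundle, and has no known infinite amenable normal subgroup, so neither your ``discard rational curves'' step nor Cheeger--Gromov applies to it. Yet Theorem \ref{main3} covers it, and this is where residual finiteness genuinely pays off: every Class $\mathrm{VII}$ surface has $b_1=1$ and every finite cover of a Class $\mathrm{VII}$ surface is again Class $\mathrm{VII}$ (\cite[Proposition II.1.21]{FM}, \cite[Lemma 5.1]{Dur05}), so the normalized first Betti numbers along any cofinal tower are $1/\deg\pi_i\to 0$ and L\"uck approximation \cite{Luck} kills $b^{(2)}_1$. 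Without this observation your Theorem-\ref{main3}-analogue proves strictly less than the paper's. A minor further point: a global spherical shell does not by itself produce nontrivial $\pi_2$ (it is a neighbourhood of an embedded $S^3$); the correct argument is Kato's theorem \cite[Theorem 1]{Kato} that GSS surfaces are deformations of blown-up primary Hopf surfaces, whose exceptional spheres and non-contractible universal covers preclude asphericity.
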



An affirmative solution to Conjecture \ref{Singer} would also settle Conjecture \ref{Hopf}. For more details on this circle of ideas, we refer to Shing-Tung Yau's influential list of main open problems in geometry \cite[Section VII, Problem 10]{SchoenY}. We also refer to Wolfgang L\"uck's book \cite{Luck02} for the definition of $L^2$-Betti numbers and for a comprehensive account on the history of the Singer conjecture. Interestingly, Conjecture \ref{Singer} is not known to be true for $n=2$ even under the assumption that $X^4$ is non-positively curved.

As observed and discussed by Mikhael Gromov in \cite[Section 8]{Gro93} and Wolfgang L\"uck \cite[Theorem 5.1]{Luck1}, Conjecture \ref{Singer} implies an \emph{effective} version of Conjecture \ref{Hopf} in dimension four. More precisely, one can state the following intriguing conjecture regarding the geography of aspherical $4$-manifolds. 

\begin{conj}[Gromov-L\"uck Inequality]\label{Gromov}
	If $X$ is a closed, oriented, aspherical $4$-manifold, then:
	\begin{equation*}
	\chi_{top}(X)\geq |\sigma(X)|,
	\end{equation*}
where $\sigma(X)$ is the signature of $X$.
\end{conj}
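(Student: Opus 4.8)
The plan is to deduce the inequality from the Singer conjecture (Conjecture~\ref{Singer}) specialized to $n=2$, following the reduction indicated by Gromov and L\"uck. The mechanism is that on a closed, oriented, aspherical $4$-manifold both $\chi_{top}(X)$ and $\sigma(X)$ can be read off from the space of $L^2$-harmonic forms on the universal cover $\widetilde X$, after which $\chi_{top}\geq|\sigma|$ reduces to a triangle inequality.

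First I would invoke Atiyah's $L^2$-index theorem in the form of the $L^2$-Gauss--Bonnet formula,
\[
\chi_{top}(X)=\sum_{k=0}^{4}(-1)^k\, b^{(2)}_k(X;\widetilde X).
\]
Granting Conjecture~\ref{Singer} with $n=2$, every $L^2$-Betti number vanishes except the middle one, so this collapses to $\chi_{top}(X)=b^{(2)}_2(X;\widetilde X)$, and in particular $\chi_{top}(X)\geq 0$. Next I would bring in the $L^2$-signature theorem, again a consequence of Atiyah's index theorem applied to the signature operator lifted to $\widetilde X$: the ordinary signature equals the $L^2$-signature. Hodge theory splits the space of $L^2$-harmonic $2$-forms on $\widetilde X$ into its self-dual and anti-self-dual summands, of von Neumann dimensions $b^{+}_{(2)}$ and $b^{-}_{(2)}$, and one has
\[
b^{(2)}_2(X;\widetilde X)=b^{+}_{(2)}+b^{-}_{(2)},\qquad \sigma(X)=b^{+}_{(2)}-b^{-}_{(2)}.
\]
Combining this with the previous step gives
\[
|\sigma(X)|=\bigl|\,b^{+}_{(2)}-b^{-}_{(2)}\,\bigr|\leq b^{+}_{(2)}+b^{-}_{(2)}=b^{(2)}_2(X;\widetilde X)=\chi_{top}(X),
\]
which is precisely Conjecture~\ref{Gromov}.

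The hard part is not this formal manipulation but its hypothesis: the vanishing of the off-middle $L^2$-Betti numbers, i.e.\ the Singer conjecture, which remains open in dimension four even under non-positive curvature. Any genuine proof of Conjecture~\ref{Gromov} within a class of aspherical $4$-manifolds must therefore first secure Singer vanishing there. Concretely, it suffices to establish $b^{(2)}_1(X;\widetilde X)=0$, since $b^{(2)}_0=b^{(2)}_4=0$ for infinite $\pi_1$ and $L^2$-Poincar\'e duality forces $b^{(2)}_3=b^{(2)}_1$.

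For aspherical \emph{complex surfaces} I would attempt exactly this: use the Enriques--Kodaira classification, together with asphericity, to pin down the possible surfaces and then establish the first $L^2$-Betti number vanishing case by case. I expect the residual finiteness assumption and the Class $\mathrm{VII}_0^+$ surfaces to be precisely the cases where this vanishing is delicate or conjectural, and for those it seems natural to look for a sharpening beyond $\chi\geq|\sigma|$ by connecting the analysis of the middle $L^2$-cohomology with Reid's conjecture, as the abstract suggests.
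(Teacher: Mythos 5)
Your derivation is the standard conditional reduction, and it is executed correctly: $L^2$-Gauss--Bonnet, the $L^2$-signature theorem, the splitting of $L^2$-harmonic $2$-forms into self-dual and anti-self-dual summands, and the triangle inequality together show that Conjecture~\ref{Singer} implies Conjecture~\ref{Gromov}. This is precisely the implication the paper attributes to Gromov \cite{Gro93} and L\"uck \cite{Luck1}, and you are candid that its hypothesis --- the Singer conjecture in dimension four --- is open. But for that very reason it is not a proof of the statement as posed, and it is also not how the paper establishes the inequality in the setting it actually treats.

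For aspherical \emph{complex surfaces}, the paper's route is unconditional and uses no $L^2$ theory at all: by Johnson--Kotschick \cite{JK93}, every compact complex surface satisfies $\chi_{top}\geq|\sigma|$ unless it is ruled over a curve of genus $g\geq 2$, and ruled surfaces are not aspherical; the sharpening in Theorem~\ref{Ginequality} then comes from the Reid conjecture circle of ideas (Horikawa--Reid give $K_X^2\geq 3\chi_{hol}$ for aspherical surfaces of general type), not from middle-dimensional $L^2$ cohomology. The difference is not cosmetic: your route fails on exactly the class you flag, namely potential aspherical Class $\mathrm{VII}_0^+$ surfaces, since they are non-K\"ahler (so Gromov's theorem, Theorem~\ref{ksurfaces}, does not apply) and their fundamental groups are not known to be residually finite (so Theorem~\ref{main3} does not apply either); Singer is genuinely open there. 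Yet the classification handles them trivially: $b_1=1$ and $b^+=2h^{2,0}=0$ give $\chi_{top}(X)=-\sigma(X)=b_2(X)>0$, so the Gromov--L\"uck inequality holds for them \emph{with equality}. In short, the paper's geography argument is strictly stronger for the statement at hand --- it covers all aspherical complex surfaces unconditionally --- whereas your $L^2$ approach is conditional everywhere and silent precisely in the one case where equality can occur.
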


In this paper, we study Conjectures \ref{Hopf}, \ref{Singer}, and \ref{Gromov} on closed, aspherical $4$-manifolds that admit a complex structure. Our knowledge of compact complex surfaces via the Kodaira-Enriques classification is a powerful tool in this case. For example in \cite[Theorem 2]{JK93}, Johnson-Kotschick show that any complex surface $X$ satisfies the inequality $\chi_{top}\geq |\sigma|$ unless $X$ is a ruled surface over a curve of genus $g\geq 2$. Since ruled surfaces are \emph{not} aspherical, we therefore have that Conjecture \ref{Gromov} (and then also Conjecture \ref{Hopf}) is true for aspherical complex surfaces. With that said, we do believe that a deeper study of Conjecture \ref{Gromov} is warranted even for complex surfaces. First, Conjecture \ref{Gromov} is quite crude when compared with other geometric inequalities constraining the geography of vast classes of $4$-manifolds. For example, the equality cases in the Hitchin-Thorpe inequality for Einstein $4$-manifolds and the Bogomolov-Miyaoka-Yau inequality for minimal surfaces of general type are neatly characterized. On the other hand, there is no (conjectural) characterization of the equality case in Conjecture \ref{Gromov}. Moreover, one may wonder if it is useful to sharpen Gromov-L\"uck inequality to a tighter constraint on the geography of aspherical $4$-manifolds that do \emph{not} satisfy $\chi_{top}=\sigma=0$. We do have a \emph{quite} satisfactory answer to all such questions when $X$ is an aspherical complex surface.

\begin{thm}\label{Ginequality}
Let $X$ be a closed, aspherical, complex surface. We have the following possibilities for its Euler characteristic $\chi_{top}(X)$ and signature $\sigma(X)$:
	\begin{itemize} 
	\item $\chi_{top}(X)= -\sigma(X)>0$ in which case $X$ is a Class $\mathrm{VII}_0^+$ surface violating the global spherical shell conjecture;
       \item $\chi_{top}(X)\geq \frac{9}{5}|\sigma(X)|$ and $\chi_{top}(X) > 0$ in which case $X$ is of general type;
       \item $\chi_{top}(X)=\sigma(X)=0$ in all other cases. 
	\end{itemize}
\end{thm}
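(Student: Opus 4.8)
The plan is to run through the Kodaira--Enriques classification of compact complex surfaces and, for each Kodaira dimension $\kappa(X)\in\{-\infty,0,1,2\}$, decide which members can be aspherical and compute their Chern invariants, using throughout the two numerical identities $\chi_{top}(X)=c_2$ and $\sigma(X)=\tfrac13(c_1^2-2c_2)$. The first structural input I would establish is that an aspherical $X$ carries no smooth rational curve $C$ with $C^2\neq 0$: the inclusion $\mathbb{P}^1\cong C\hookrightarrow X$ is null-homotopic since $\pi_2(X)=0$, so $[C]=0$ in $H_2(X;\Z)$, contradicting $[C]^2=C^2\neq 0$. In particular $X$ has no $(-1)$-curve and is \emph{minimal}, and the $(-2)$-curves occurring in reducible or singular fibres of elliptic pencils are likewise excluded; a companion argument in the Kähler case (for a Kähler form $\omega$ one has $\int_C\omega>0$, so $[C]\neq 0$) removes the remaining rational curves. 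This at once discards the non-aspherical classes: rational and ruled surfaces, $\mathrm{K3}$ and Enriques surfaces (whose universal covers are $\mathbb{P}^1$-bundles, respectively $\mathrm{K3}$, hence not contractible), and all non-minimal surfaces.

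Next I would collect the aspherical surfaces with $\chi_{top}=\sigma=0$, which populate the third bullet. When $\kappa(X)=0$ the minimal aspherical representatives are the complex tori, the bielliptic surfaces, and the primary and secondary Kodaira surfaces, each with $c_1^2=c_2=0$, hence $\chi_{top}=\sigma=0$. When $\kappa(X)=1$, asphericity forces the elliptic fibration to have no singular fibres, since every fibre in Kodaira's list other than a smooth or multiple smooth elliptic curve contains a rational component; thus $X$ is a smooth elliptic bundle with $c_1^2=0$ and $\chi_{top}=0$, whence $\sigma=-\tfrac23\chi_{top}=0$. Finally, within Class $\mathrm{VII}$ minimality leaves only $b_2=0$ or $b_2>0$; among the $b_2=0$ surfaces the Hopf surfaces have universal cover homotopy equivalent to $S^3$ and are not aspherical, whereas the Inoue surfaces are aspherical with $b_2=0$, so $\chi_{top}=2-2b_1+b_2=0$ and $\sigma=0$.

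The surfaces with $b_2>0$ in Class $\mathrm{VII}$ are exactly the Class $\mathrm{VII}_0^+$ surfaces, and they yield the first bullet. For these $b_1=1$ and the intersection form is negative definite, so $b_2^+=0$ and $b_2=b_2^-$; hence $\chi_{top}=b_2>0$ and $\sigma=-b_2$, giving $\chi_{top}=-\sigma>0$. A surface admitting a global spherical shell contains rational curves and is therefore not aspherical, so an aspherical surface in this class would be a Class $\mathrm{VII}_0^+$ surface \emph{without} a global spherical shell, contradicting the global spherical shell conjecture.

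The remaining case $\kappa(X)=2$ gives the second bullet, and here lies the crux. Now $X$ is minimal of general type, so $c_2=\chi_{top}>0$, and I must prove $\chi_{top}\geq\tfrac95|\sigma|$. Rewriting via $\chi_{top}=c_2$ and $\sigma=\tfrac13(c_1^2-2c_2)$, the inequality splits into two Chern-number estimates. For $\sigma\geq 0$ it follows directly from the Bogomolov--Miyaoka--Yau inequality $c_1^2\leq 3c_2$, which in fact gives the stronger $\chi_{top}=c_2\geq c_1^2-2c_2=3\sigma\geq\tfrac95\sigma$. For $\sigma<0$ it is equivalent to $c_2\leq 3c_1^2$, i.e. to the \emph{reverse} geographic bound $c_1^2\geq 3\chi(\sO_X)$; note that this threshold is sharp, since the line $c_1^2=3\chi(\sO_X)$ corresponds exactly to $\chi_{top}=\tfrac95|\sigma|$. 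Establishing $c_1^2\geq 3\chi(\sO_X)$ for aspherical $X$ is the main obstacle. For surfaces of maximal Albanese dimension it follows a fortiori from the Severi inequality $c_1^2\geq 4\chi(\sO_X)$, and for those fibred over a curve one can try to push through a slope inequality; the delicate sub-case is small irregularity, where the only available input is a Reid-type statement that a minimal surface of general type with $c_1^2<3\chi(\sO_X)$ has finite fundamental group. The hard part will be to prove this instance of Reid's conjecture for aspherical surfaces, and the strategy is to connect it with the Singer conjecture (proved earlier for residually finite $\pi_1$, and accessible through $L^2$-methods in general): the vanishing of the outer $L^2$-Betti numbers it predicts should constrain $\chi(\sO_X)$ against $c_1^2$ and force $c_1^2\geq 3\chi(\sO_X)$.
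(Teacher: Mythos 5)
Your treatment of Kodaira dimensions $-\infty$, $0$, and $1$ and of Class $\mathrm{VII}_0^+$ matches the paper's (classification plus the computations $\chi_{top}=\sigma=0$, resp.\ $\chi_{top}=-\sigma=b_2>0$), and your reduction of the general type case to the two Chern inequalities --- BMY for $\sigma\geq 0$, and $K_X^2\geq 3\chi(\sO_X)$ for $\sigma<0$ --- is exactly the right threshold. But at that point your proposal has a genuine gap: you do not prove $K_X^2\geq 3\chi(\sO_X)$, you only sketch strategies for it. The Severi inequality covers maximal Albanese dimension, slope inequalities are left vague, and for the remaining case you invoke ``a Reid-type statement'' that a minimal surface of general type with $K_X^2<3\chi(\sO_X)$ has \emph{finite} fundamental group, which you then say still ``will be hard to prove.'' This misses the key point in two ways. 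First, the relevant statement is not open: it is a theorem of Horikawa and Reid that a minimal surface of general type with $K_X^2<3\chi(\sO_X)$ has $\pi_1$ finite \emph{or commensurable with the fundamental group of a curve} --- your version drops the second alternative, and with it the actual work that remains. Second, what must be supplied is precisely the exclusion of that second alternative for aspherical $X$: if $\pi_1(X)$ were commensurable with a curve group, a finite cover $X'$ of $X$ would be a closed aspherical $4$-manifold with $\pi_1(X')$ isomorphic to the fundamental group of a closed hyperbolic curve $C'$; since both are aspherical they would be homotopy equivalent, contradicting $H_4(X';\Z)\neq 0=H_4(C';\Z)$. With that observation, asphericity (infinite $\pi_1$, not commensurable to a curve group) plus Horikawa--Reid immediately yields $K_X^2\geq 3\chi(\sO_X)$, hence $\chi_{top}\geq \tfrac95(-\sigma)$, and BMY finishes the proof. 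This is the paper's argument.

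Your proposed fallback --- deriving $K_X^2\geq 3\chi(\sO_X)$ from the Singer conjecture --- cannot work as stated. The Singer vanishing $b_1^{(2)}=b_3^{(2)}=0$, fed through Atiyah's $L^2$-index theorem for the signature operator, yields only $\chi_{top}(X)\geq |\sigma(X)|$, i.e.\ the Gromov--L\"uck inequality itself; in Chern-number terms this is $K_X^2\geq \tfrac83\chi(\sO_X)$ when $\sigma<0$, strictly weaker than the needed $K_X^2\geq 3\chi(\sO_X)$. No purely $L^2$-cohomological count of this kind can reach the $\tfrac95$ constant: the extra strength comes from the classification results of Horikawa and Reid on surfaces with small $K^2$, which is input of a different nature. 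So the logical direction in the paper is the opposite of the one you propose: Reid-type geography sharpens the conclusion of (and is not deduced from) the Singer circle of ideas.
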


In particular, we see that if the global spherical shell conjecture is true, then Gromov-L\"uck inequality is always strict for closed, aspherical, complex surfaces unless the Euler characteristic and signature are both zero. Furthermore, we obtain a factor $9/5>1$ in front of the absolute value of the signature in all of the remaining cases.

Next, we address the big elephant in the room: is Conjecture \ref{Singer} true for closed, aspherical, complex surfaces? We observe that Gromov's characterization of closed K\"ahler manifolds with non-vanishing first $L^2$-Betti number, when combined with the Kodaira-Enriques classification, suffices to show this conjecture holds true for all closed, aspherical, complex surfaces that are not in Class $\mathrm{VII}_0^+$, see Theorem \ref{csurfaces}. Frustratingly enough, the validity of Conjecture \ref{Singer} also stumbles upon the existence of aspherical surfaces in Class $\mathrm{VII}_0^+$. We conclude by providing a proof of the following.

\begin{thm}\label{main3}
	Let $X$ be a closed, aspherical complex surface with residually finite 
	fundamental group, and let $\widetilde{X}$ be the topological universal cover. The $L^2$-Betti numbers are:
	\begin{equation*}
	b^{(2)}_k (X; \widetilde{X})=\begin{cases} 
	\chi_{top}(X)  &\mbox{if }\quad    k =  2  \\
	0 & \mbox{if }\quad   k  \neq 2
	\end{cases}
	\end{equation*}
\end{thm}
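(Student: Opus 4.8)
The plan is to reduce the statement to the single vanishing $b_1^{(2)}(X;\widetilde{X})=0$ and then to exploit residual finiteness through L\"uck's approximation theorem. First I would record the structural facts available for any closed, oriented, aspherical $4$-manifold. Since $X$ is aspherical of positive dimension, $\pi_1(X)$ is infinite, so $b_0^{(2)}=b_4^{(2)}=0$; Poincar\'e duality for $L^2$-Betti numbers gives $b_1^{(2)}=b_3^{(2)}$; and Atiyah's $L^2$-index theorem yields $\chi_{top}(X)=b_0^{(2)}-b_1^{(2)}+b_2^{(2)}-b_3^{(2)}+b_4^{(2)}=b_2^{(2)}-2b_1^{(2)}$. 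Hence $b_2^{(2)}=\chi_{top}(X)+2b_1^{(2)}$, and all of the asserted values of the $b_k^{(2)}$ follow as soon as one shows $b_1^{(2)}(X;\widetilde{X})=0$.

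Second, because $\pi=\pi_1(X)$ is residually finite and $X$ is a closed aspherical manifold (hence a finite model for $B\pi$), I would invoke L\"uck's approximation theorem: for any cofinal tower $\pi=\pi_0\supseteq\pi_1\supseteq\cdots$ of finite-index normal subgroups with $\bigcap_i\pi_i=\{1\}$, writing $X_i\to X$ for the corresponding finite cover, one has $b_1^{(2)}(X;\widetilde{X})=\lim_i b_1(X_i)/[\pi:\pi_i]$. Each $X_i$ is again a closed aspherical complex surface, so the problem becomes showing that the ordinary first Betti numbers of the tower grow sublinearly in the index.

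Third, I would split according to the Kodaira--Enriques classification, which (since ruled and rational surfaces, as well as $K3$ and Enriques surfaces, are not aspherical) leaves complex tori, bielliptic surfaces, aspherical properly elliptic surfaces, Inoue surfaces, surfaces of general type, and the putative aspherical $\mathrm{VII}_0^+$ surfaces. In the first group the fundamental group contains an infinite amenable normal subgroup: $\Z^4$ for tori and bielliptic surfaces, the $\Z^2$ coming from a smooth fibre of the elliptic fibration (all fibres have smooth reduction, as rational components would force $\pi_2\neq 0$) in the elliptic case, and a polycyclic subgroup for Inoue surfaces. Hence the Cheeger--Gromov vanishing theorem gives $b_k^{(2)}=0$ for all $k$; moreover each such surface has $\chi_{top}=0$, so the statement holds in these cases. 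This isolates the surfaces of general type and, conjecturally, the $\mathrm{VII}_0^+$ surfaces, all of which satisfy $\chi_{top}(X)>0$.

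For the remaining cases the engine is the multiplicativity of holomorphic invariants: Chern numbers are multiplicative in finite \'etale covers, so Noether's formula gives $\chi(\sO_{X_i})=[\pi:\pi_i]\,\chi(\sO_X)$, whence $q(X_i)=p_g(X_i)+1-[\pi:\pi_i]\chi(\sO_X)$. Since $b_1(X_i)\le 2q(X_i)$, sublinear growth of $b_1(X_i)$ is equivalent to $q(X_i)/[\pi:\pi_i]\to 0$, that is, to the vanishing of the $L^2$-Hodge number $h^{(2)}_{0,1}(\widetilde{X})$, the von Neumann dimension of the space of $L^2$ holomorphic $1$-forms on the universal cover. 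I expect this holomorphic vanishing to be the \emph{main obstacle}: it is the one place where the geometry of the surface, rather than soft $L^2$-machinery, must enter, and it is precisely the input that Gromov's theory supplies in the K\"ahler-group approach. For the concrete aspherical general-type surfaces it can be checked directly---the $L^2$-cohomology of the ball $\mathbf{B}^2$ is concentrated in the middle degree, so $h^{(2)}_{0,1}=0$ for ball quotients, and the $L^2$-K\"unneth formula gives the same for products of higher-genus curves---so the heart of the matter is to establish $h^{(2)}_{0,1}(\widetilde{X})=0$ uniformly, using only that $K_X$ is nef and big together with the $L^2$-approximation of $q(X_i)$ afforded by residual finiteness, and to treat the $\mathrm{VII}_0^+$ surfaces on the same footing.
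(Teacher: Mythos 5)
Your overall framework---reduction to $b_1^{(2)}(X;\widetilde{X})=0$ via Poincar\'e duality and the $L^2$-index theorem, L\"uck approximation over a cofinal tower, the split along the Kodaira--Enriques classification, and Cheeger--Gromov vanishing for tori, bielliptic surfaces, Inoue--Bombieri surfaces, and aspherical properly elliptic surfaces---coincides with the paper's. But the proposal stops exactly where the real work begins: you explicitly leave open the vanishing $h^{(2)}_{0,1}(\widetilde{X})=0$ for surfaces of general type (you call it ``the main obstacle'' and only verify it for ball quotients and products of curves), and you defer the Class $\mathrm{VII}_0^+$ case to be treated ``on the same footing.'' These two cases are the entire content of the theorem beyond soft $L^2$-machinery, so as it stands this is a correct reduction plus a statement of the remaining problem, not a proof.

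Both gaps are filled in the paper, and in neither case by the uniform nef-and-big argument you hope for. For Class $\mathrm{VII}$ (which covers any hypothetical aspherical $\mathrm{VII}_0^+$ surface, and also Inoue--Bombieri surfaces) the fix is elementary but hinges on a fact absent from your proposal: every class $\mathrm{VII}$ surface has $b_1=1$, and every finite unramified cover of a class $\mathrm{VII}$ surface is again class $\mathrm{VII}$ (see \cite[Proposition II.1.21]{FM} or \cite[Lemma 5.1]{Dur05}), so $b_1(X_i)=1$ along the whole tower and L\"uck approximation gives $b_1^{(2)}=\lim 1/\deg\pi_i=0$ immediately. Your proposed route cannot work there: $\mathrm{VII}_0^+$ surfaces have Kodaira dimension $-\infty$ and $c_1^2<0$, so ``$K_X$ nef and big'' is false, and they are non-K\"ahler with odd $b_1$, so the identity $b_1=2q$ underlying your reduction to $h^{(2)}_{0,1}$ also fails. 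For general type the paper does not establish any uniform holomorphic $L^2$-vanishing; instead it runs the tower through the Albanese maps. If the Albanese images $a_{X_i}(X_i)$ are points or curves, Proposition~\ref{Albanese1} bounds $b_1(X_i)=b_1(S_i)$ by Riemann--Hurwitz in terms of $\deg a_{\pi_i}$, and then shows the degree $k_i$ of $\pi_i$ restricted to a general Albanese fiber tends to infinity, using the Seshadri-constant growth on cofinal towers of \cite{DD19} (together with largeness of $\pi_1$ for aspherical projective varieties); since $\deg\pi_i\geq k_i\deg a_{\pi_i}$, the normalized $b_1$ goes to zero. If instead the Albanese map is generically finite onto its image (equivalently, semismall in complex dimension two), the paper invokes \cite[Theorem 1.3]{DL23}. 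Supplying an argument of this kind for the irregular general type case, and the class $\mathrm{VII}$ covering argument above, is what your proposal is missing.
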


Our proof of Theorem \ref{main3} does not rely upon Gromov's theory of K\"ahler groups. It combines the study of the Albanese map, L\"uck's approximation theorem and the Kodaira-Enriques classification.

In real dimension greater than or equal to four, there is a plethora of examples of closed aspherical manifolds whose fundamental group is not residually finite. Such examples can be constructed with the so-called Davis reflection trick \cite{Davis}, see for example \cite{Mess90}. 
It seems to be currently unknown whether or not there are examples of aspherical smooth projective varieties with non-residually finite $\pi_1$. Indeed, the examples of Toledo \cite{Dom93} and Catanese-Koll\'ar \cite{CK90} of smooth projective varieties with non-residually finite $\pi_1$ appear not to be aspherical. It is currently unknown whether the non-positively curved smooth minimal toroidal compactifications of ball quotients identified in \cite[Theorem A]{tesi}, or the negatively curved branched covers constructed in  \cite[Theorem 1.5]{S22} have residually finite $\pi_1$. 

\noindent\textbf{Acknowledgments}. 
The authors thank Wolfgang L\"uck for for useful bibliographical suggestions and for pertinent comments on the manuscript. The first named author thanks Vestislav Apostolov for answering a question about Class $\mathrm{VII}_0^+$ surfaces. The second named author thanks Fabrizio Catanese, Rita Pardini, Matthew Stover, and Roberto Svaldi for valuable discussions. He also thanks the Mathematics Departments of the University of Milan and the University of Waterloo for the invitation to present research related to this project, for support, and for the nice working environments during his visits in the Spring of 2023. The third named author thanks Alice Garbagnati for useful conversations, and the Mathematics Department of the University of Florida for the optimal working environment provided during his visit in the Spring of 2023. \\ \\

\section{Aspherical Complex Surfaces}\label{asphericalS}

In this section, we give a brief overview of those compact complex surfaces which are aspherical. These surfaces have contractible universal cover or equivalently, $\pi_k$ vanishes for $k > 1$. By \cite[Lemma 2]{ADC}, such surfaces are minimal. We will work through the Kodaira-Enriques classification by Kodaira dimension.

{\bf Kodaira dimension $-\infty$:} In the K\"ahler case, such surfaces are rational or ruled. The former consists of $\mathbb{CP}^2$ and Hirzebruch surfaces $\Sigma_n = \mathbb{P}_{\mathbb{CP}^1}(\mathcal{O}\oplus\mathcal{O}(n))$. These are all simply connected, so they are their own universal covers. As they are not contractible, rational surfaces are not aspherical. Ruled surfaces are holomorphic fiber bundles with fiber $\mathbb{CP}^1$ and structure group $PGL(2, \mathbb{C})$ over a smooth connected curve $C$ of positive genus. Every such surface is the projectivisation of a rank two holomorphic vector bundle over $C$. From the long exact sequence in homotopy, it follows that ruled surfaces have non-zero $\pi_2$. In fact, if $\widetilde{C} \to C$ denotes the universal covering of $C$, pulling back the $\mathbb{CP}^1$-bundle by this map exhibits ruled surfaces have universal cover $\mathbb{CP}^1\times\widetilde{C}$ -- since $\widetilde{C}$ is Stein, we have $H^1(\widetilde{C}, \mathcal{PGL}(2, \mathbb{C})) = 0$ and hence the $\mathbb{CP}^1$ bundle over $\widetilde{C}$ is trivial.

A non-K\"ahler surface with Kodaira dimension $-\infty$ is called a \textit{Class $\mathrm{VII}$ surface}. A minimal such surface is called a \textit{Class $\mathrm{VII}_0$ surface}, and if furthermore the second Betti number is positive, then it is called a \textit{Class $\mathrm{VII}_0^+$ surface}. A Class $\mathrm{VII}_0$ surface with second Betti number zero is biholomorphic to a Hopf surface or an Inoue-Bombieri surface, see \cite{Bog1}, \cite{Bog2}, \cite{LYZ}, and \cite{Tel1}. Hopf surfaces have universal cover $\mathbb{C}^2\setminus\{0\}$ which is not contractible, while Inoue-Bombieri surfaces have universal cover $\mathbb{C}\times\mathbb{H}$ which is contractible, so they are aspherical. 

A \textit{spherical shell} in a complex surface is an open subset biholomorphic to a neighbourhood of $S^3$ in $\mathbb{C}^2\setminus\{0\}$. If the complement is connected, then it is called a \textit{global spherical shell}. A surface which admits a global spherical shell is a deformation of a primary Hopf surface\footnote{A Hopf surface $X$ is called \textit{primary} if $\pi_1(X) \cong \mathbb{Z}$. Such surfaces are diffeomorphic to $S^1\times S^3$.} blownup at finitely many points \cite[Theorem 1]{Kato} -- note that such surfaces are not aspherical. The global spherical shell conjecture asserts that all Class $\mathrm{VII}_0^+$ surfaces contains a global spherical shell. The conjecture remains open with some progress for small values of $b_2$, see \cite{Tel2}, \cite{Tel3}, \cite{Tel4}. It is not yet known if there exists an aspherical Class $\mathrm{VII}_0^+$ surface (it would necessarily violate the global spherical shell conjecture).

Since class $\mathrm{VII}$ surfaces have first Betti number $1$, such surfaces have $\chi_{top}(X) = b_2(X)$. Furthermore, as they are non-K\"ahler, we see that $b^+(X) = 2h^{2,0}(X) = 0$ and hence $\sigma(X) = -b^-(X) = -b_2(X)$. So Inoue-Bombieri surfaces have $\chi_{top}(X) = \sigma(X) = 0$, while aspherical class $\mathrm{VII}_0^+$ surfaces have $\chi_{top}(X) = -\sigma(X) = b_2(X) > 0$.

{\bf Kodaira dimension $0$:} In the K\"ahler case, there are two families: tori and their quotients (bi-elliptic surfaces), and K3 surfaces and their quotients (Enriques surfaces). The former have universal cover $\mathbb{C}^2$ and are therefore aspherical, while the latter have K3 surfaces as their universal cover and hence are not aspherical. 

In the non-K\"ahler realm, such surfaces are primary Kodaira surfaces and their quotients (secondary Kodaira surfaces). Primary Kodaira surfaces are holomorphic principal elliptic curve bundles over a smooth connected genus one curve. It follows from the long exact sequence in homotopy that $\pi_k = 0$ for $k > 1$. Just as in the case of ruled surfaces, we can also identify the universal cover of Kodaira surfaces as $\mathbb{C}^2$ by pulling back such a bundle by the universal covering of the base. A description of primary Kodaira surfaces as quotients of $\mathbb{C}^2$ by a group of affine transformations was given by Suwa \cite[Theorem 2]{Suwa75}.

{\bf Kodaira dimension $1$:} A compact surface $X$ is called an \textit{elliptic surface} if there is a smooth connected curve $C$ and a holomorphic map $\pi : X \to C$ such that the generic fiber is a smooth genus one curve; the map $\pi$ is called an \textit{elliptic fibration}. We call an elliptic surface $X$ \textit{relatively minimal} if there are no $-1$ curves in the fibers of $\pi$ -- every elliptic surface is an iterated blowup of a relatively minimal elliptic surface. Every surface of Kodaira dimension $1$ is elliptic (see \cite[Lemma 7.2(a)]{Wall86}), but there are also elliptic surfaces of Kodaira dimension $-\infty$ and Kodaira dimension $0$. An elliptic surface with Kodaira dimension $1$ is called a \textit{properly elliptic surface}. 

The non-generic fibers of a relatively minimal elliptic fibration $\pi : X \to C$, called exceptional fibers, were classified by Kodaira, see \cite[Theorem 6.2]{Kod63}. Aside from multiples of a smooth genus one curve (known as a multiple fibers with smooth reduction), every other possibility is a configuration of (possibly singular) rational curves. The elliptic fibration induces an orbifold structure on $C$ by declaring images of multiple fibers as cone points whose order is the multiplicity of the fiber. We denote the orbifold Euler characteristic and orbifold fundamental group of $C$ by $\chi^{\text{orb}}(C)$ and $\pi_1^{\text{orb}}(C)$ respectively.

\begin{prop}\label{asphericalelliptic}
An elliptic surface $X \to C$ is aspherical if and only if it is relatively minimal with no exceptional fibers other than multiple fibers with smooth reduction, and $X$ has Kodaira dimension $0$ or $1$.
\end{prop}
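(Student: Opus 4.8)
The plan is to handle both implications through one structural picture: when $X\to C$ is relatively minimal with only multiple fibers of smooth reduction, $\pi$ is a $T^2$-orbifold bundle over the orbifold $C^{\mathrm{orb}}$ (a cone point of order $m_i$ at the image of each multiple fiber $m_iE_i$), and its asphericity is governed by $\chi^{\mathrm{orb}}(C)$. The first step is the arithmetic dictionary relating Kodaira dimension to $\chi^{\mathrm{orb}}$. Kodaira's canonical bundle formula gives $K_X\equiv\pi^*(K_C+L)+\sum_i(m_i-1)E_i$ with $\deg L=\chi(\mathcal O_X)$; since $m_iE_i\equiv\pi^*(\mathrm{pt})$, the numerical $\mathbb Q$-canonical class is a multiple of the fiber with coefficient $2g(C)-2+\chi(\mathcal O_X)+\sum_i(1-\tfrac1{m_i})$. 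When the only singular fibers are multiple with smooth reduction one has $e(X)=0$, hence $\chi(\mathcal O_X)=\tfrac1{12}e(X)=0$ by Noether (using $K_X^2=0$ for a minimal elliptic surface), so this coefficient equals $-\chi^{\mathrm{orb}}(C)$. Thus, under the fiber hypothesis, $\kappa(X)=1,0,-\infty$ according as $\chi^{\mathrm{orb}}(C)<0,=0,>0$; in particular ``$\kappa\in\{0,1\}$'' is exactly $\chi^{\mathrm{orb}}(C)\le 0$.

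For the forward implication, assume $X$ is aspherical, so $\pi_2(X)=0$ and every map $S^2\to X$ is null-homologous. A fibration that is relatively non-minimal, or that has a singular fiber other than a multiple fiber of smooth reduction, contains a rational curve: a $(-1)$- or $(-2)$-curve among the fiber components, or an irreducible nodal/cuspidal fiber of type $I_1$ or $II$. Normalizing such a curve yields $f\colon S^2\to X$ with $f_*[S^2]\neq 0$ in $H_2(X;\mathbb Z)$ — nonzero because a component $\Theta$ has $\Theta^2\in\{-1,-2\}\neq0$, while an irreducible fiber $F$ has $F\cdot D>0$ for any multisection $D$ — contradicting $\pi_2(X)=0$. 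Hence $X$ is relatively minimal with only multiple fibers of smooth reduction, so $\chi(\mathcal O_X)=0$ and $\chi_{top}(X)=e(X)=0$. It remains to exclude $\kappa(X)=-\infty$. Here I invoke the Kodaira–Enriques classification: a minimal surface with $\kappa=-\infty$ and $\chi_{top}=0$ is either ruled over an elliptic curve (not aspherical, as ruled surfaces have non-contractible universal cover, recorded above) or a Class $\mathrm{VII}_0$ surface with $b_2=0$, i.e.\ a Hopf surface (universal cover $\mathbb C^2\setminus\{0\}$, not aspherical) or an Inoue–Bombieri surface (which contains no curves, hence is not elliptic). Therefore $\kappa(X)\in\{0,1\}$.

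For the converse, assume $X\to C$ is relatively minimal, has only multiple fibers of smooth reduction, and $\kappa(X)\in\{0,1\}$; by the dictionary $\chi^{\mathrm{orb}}(C)\le 0$. Every $2$-orbifold with $\chi^{\mathrm{orb}}\le0$ is good (developable), with orbifold universal cover $\widetilde{C^{\mathrm{orb}}}\cong\mathbb E^2$ or $\mathbb H^2$ — contractible — on which $\Gamma=\pi_1^{\mathrm{orb}}(C)$ acts properly discontinuously. I then form the fiber product $\widehat X:=X\times_{C^{\mathrm{orb}}}\widetilde{C^{\mathrm{orb}}}$. The essential local computation is that over a cone point of order $m$ the fibration has the log-transform model $(\mathbb D\times E)/(\mathbb Z/m)$ with base map $[z,x]\mapsto z^m$, and pulling back along the $m$-fold branched chart $w\mapsto w^m$ un-twists this quotient to the trivial bundle $\mathbb D\times E$; consequently $\widehat X\to\widetilde{C^{\mathrm{orb}}}$ is an honest smooth $T^2$-bundle and $\widehat X\to X$ is an \emph{unramified} covering with deck group $\Gamma$. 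Since the base is contractible the bundle is homotopy-trivial, so $\pi_k(\widehat X)\cong\pi_k(T^2)=0$ for $k\ge2$; as $\widehat X\to X$ is a covering, $\pi_k(X)=0$ for all $k\ge2$ and $X$ is aspherical.

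The main obstacle is the converse's covering-space construction: one must verify that the branched base change genuinely produces an \emph{unramified} covering $\widehat X\to X$ carrying a smooth $T^2$-bundle structure, and it is precisely the \emph{smooth reduction} of the multiple fibers that makes this work — a multiple fiber with singular reduction would fail to un-twist and would leave rational curves upstairs. A secondary point needing care is the threshold bookkeeping in the canonical bundle formula identifying $\kappa\in\{0,1\}$ with $\chi^{\mathrm{orb}}(C)\le0$, together with the fact that no \emph{bad} $2$-orbifold intervenes (all bad $2$-orbifolds have $\chi^{\mathrm{orb}}>0$, so they occur only in the excluded $\kappa=-\infty$ range).
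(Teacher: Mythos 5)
Your forward implication has a genuine gap in the non-K\"ahler case, at exactly the sub-case you isolate: irreducible nodal or cuspidal fibers (types $I_1$, $II$, and multiple fibers with singular reduction). You certify $f_*[S^2]\neq 0$ by pairing the fiber with a multisection $D$. But a multisection forces algebraicity (if $D\cdot F>0$ and $F^2=0$, then $(D+nF)^2>0$ for large $n$, so $X$ is projective by Kodaira's criterion), so multisections exist \emph{only} for algebraic elliptic surfaces. When $b_1(X)$ is odd --- the non-K\"ahler case --- there are none, and the obstruction is fatal to any argument of this type: for odd $b_1$ the fiber class of an elliptic fibration is torsion in $H_2(X;\mathbb{Z})$, i.e.\ zero in $H_2(X;\mathbb{R})$, hence so is the class of the reduction of every fiber. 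No intersection number or cohomological pairing can then detect homological nontriviality, and on non-K\"ahler surfaces compact curves really can be null-homologous (elliptic curves on a Hopf surface $S^1\times S^3$, where $H_2=0$). Even in the K\"ahler case your multisection may fail to exist (non-projective elliptic K3 surfaces whose N\'eron--Severi group is generated by the fiber class have only irreducible singular fibers and no multisections), though there the K\"ahler form substitutes: $\int_R\omega>0$ gives $[R]\neq 0$ in real homology. In the non-K\"ahler case there is no substitute; this is precisely where the paper invokes \cite[Lemma 7.2(b)]{Wall86}, the classification fact that an elliptic surface with odd $b_1$ has no exceptional fibers other than multiple fibers with smooth reduction. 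Your self-intersection argument for $(-1)$- and $(-2)$-curve components is fine in every case (a class of nonzero square is nonzero), and your exclusion of $\kappa=-\infty$ via the Kodaira--Enriques classification matches the paper's; but the irreducible-fiber sub-case cannot be closed by the stated method and needs Wall's lemma (or an equivalent input), as in the paper.

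Your converse direction, by contrast, is essentially correct and takes a genuinely different route: you untwist the multiple fibers over the \emph{orbifold universal cover} of the base (an infinite Seifert-type covering, concluding via Ehresmann and triviality of bundles over a contractible base), whereas the paper passes to a \emph{finite} good orbifold cover $C'\to C$, pulls back to an unramified cover with no multiple fibers \cite[Proposition III.9.1]{BHPV04}, then uses $\chi(\mathcal{O}_{X'})=0$ to see all fibers are isomorphic \cite[Proposition V.12.2]{BHPV04}, gets a holomorphic bundle by \cite{FG65}, and applies the homotopy long exact sequence. Your route avoids the isotriviality and local-triviality inputs, at the cost of the covering-space bookkeeping. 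One over-claim to repair: as a \emph{local} statement, a multiple fiber with smooth reduction is a free $\mathbb{Z}/m$-quotient of a smooth elliptic fibration over the disc, not necessarily of a product $\mathbb{D}\times E$ (logarithmic transforms exist on non-isotrivial fibrations, where nearby fibers vary in moduli). In your setting the product model is in fact correct because the fiber hypothesis forces the $j$-invariant to omit $\infty$, hence to be constant; alternatively, your argument runs verbatim with the weaker quotient model, since Ehresmann still makes the untwisted pullback a smooth $T^2$-bundle and the deck action remains free.
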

\begin{proof}
If $X$ is aspherical, then it is minimal (and hence relatively minimal) by \cite[Lemma 2]{ADC}. Furthermore, if $X$ is K\"ahler, then $X$ contains no rational curves, so the only exceptional fibers must be multiple fibers with smooth reduction. In the non-K\"ahler case, the same is true by \cite[Lemma 7.2(b)]{Wall86}. By \cite[Lemma I.3.18 (ii)]{FM}, we have $\chi(\mathcal{O}_X) = 0$. If $\chi^{\text{orb}}(C) > 0 = \chi(\mathcal{O}_X)$, then $\kappa(X) = -\infty$ by \cite[Lemma 7.1]{Wall86}. An aspherical surface with Kodaira dimension $-\infty$ is either Inoue-Bombieri or a Class $\mathrm{VII}_0^+$ surface. The former can't be elliptic as they contain no complex curves, and the latter can't be elliptic as they satisfy $c_1^2 < 0$. Therefore $\chi^{\text{orb}}(C) \leq 0$ and hence $X$ has Kodaira dimension $0$ or $1$ by \cite[Lemma 7.1]{Wall86}.

Conversely, if $X \to C$ is relatively minimal with no exceptional fibers other than multiple fibers with smooth reduction, and $X$ has Kodaira dimension $0$ or $1$, then $\chi^{\text{orb}}(C) \leq \chi(\mathcal{O}_X) = 0$. Therefore $C$ is a good orbifold, i.e. there is a finite orbifold covering $C' \to C$ where $C'$ is a manifold. Pulling back $X \to C$ by this map induces an elliptic fibration $X' \to C'$ with no multiple fibers such that $X'$ is a finite unramified cover of $X$, see \cite[Proposition III.9.1]{BHPV04}. Since $\chi(\mathcal{O}_{X'}) = 0$, all the fibres of $X' \to C'$ are isomorphic by \cite[Proposition V.12.2]{BHPV04} and the remark which precedes it, and hence $X' \to C'$ is locally trivial by \cite{FG65}. As the orbifold Euler characteristic is multiplicative under orbifold coverings, we have $\chi^{\text{orb}}(C') \leq 0$ and hence $C'$ has positive genus. Applying the long exact sequence of homotopy groups, we see that $X'$ is aspherical.
\end{proof}

\begin{rem}\label{bundle}
In the course of the proof, we showed that an aspherical elliptic surface $X$ has a finite\ cover $X'$ which is a holomorphic fiber bundle over a Riemann surface of positive genus, with an elliptic curve fiber. The structure group of this bundle is the automorphism group of the elliptic curve. Since translations form a finite index subgroup, there is a finite cover $C'' \to C'$ such that the pullback of $X' \to C'$ gives a bundle $X'' \to C''$ whose structure group reduces to the group of translations. That is, the bundle $X'' \to C''$ is a holomorphic principal elliptic bundle. Just as in the discussion of ruled surfaces, it follows that the universal cover of $X''$, and hence $X$, is biholomorphic to $\mathbb{C}\times\mathbb{C}$ if $X$ has Kodaira dimension $0$, or $\mathbb{C}\times\mathbb{H}$ if $X$ has Kodaira dimension $1$.\end{rem}

\begin{cor}
Aspherical elliptic surfaces contain no rational curves.
\end{cor}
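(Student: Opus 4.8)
The plan is to argue by contradiction, leveraging the explicit description of the universal cover furnished by Remark \ref{bundle}. Suppose $X$ is an aspherical elliptic surface containing a rational curve $R$, and let $\nu \colon \mathbb{CP}^1 \to X$ be the normalization map of $R$, a nonconstant holomorphic map with image $R$. The reason to pass to the normalization rather than to the inclusion of $R$ itself is that a rational curve may be singular; its normalization is $\mathbb{CP}^1$ by definition, and the normalization map is a genuine nonconstant holomorphic map into $X$.

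Since $\mathbb{CP}^1$ is simply connected, the lifting criterion for covering spaces guarantees that $\nu$ factors through the universal covering $p \colon \widetilde{X} \to X$: there is a continuous lift $\widetilde{\nu} \colon \mathbb{CP}^1 \to \widetilde{X}$ with $p \circ \widetilde{\nu} = \nu$. Because $p$ is a local biholomorphism and $\nu$ is holomorphic, $\widetilde{\nu}$ is holomorphic as well. Now I would invoke Remark \ref{bundle}: the universal cover $\widetilde{X}$ is biholomorphic either to $\mathbb{C}\times\mathbb{C}$ (Kodaira dimension $0$) or to $\mathbb{C}\times\mathbb{H}$ (Kodaira dimension $1$). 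Composing $\widetilde{\nu}$ with the two coordinate projections yields holomorphic functions from $\mathbb{CP}^1$ to $\mathbb{C}$, respectively to $\mathbb{H}\subset\mathbb{C}$. Since $\mathbb{CP}^1$ is compact and connected, the maximum modulus principle forces each of these functions to be constant, so $\widetilde{\nu}$, and therefore $\nu$, is constant, contradicting that $R$ is a curve.

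The argument is short, and the one point deserving emphasis is that purely topological asphericity does not suffice: a map $S^2 \to \widetilde{X}$ into a contractible space is null-homotopic but need not be constant, so one cannot conclude from $\pi_2(\widetilde{X}) = 0$ alone. What is genuinely needed is the complex-analytic fact that the universal cover admits no nonconstant holomorphic image of $\mathbb{CP}^1$, which is exactly what the biholomorphism type $\mathbb{C}\times\mathbb{C}$ or $\mathbb{C}\times\mathbb{H}$ recorded in Remark \ref{bundle} provides (both targets being Stein). I do not expect any serious obstacle beyond correctly invoking the holomorphic lifting step and the structure of the universal cover.
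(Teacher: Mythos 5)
Your proof is correct and follows essentially the same route as the paper: lift a (normalized) rational curve to the universal cover using simple connectivity, then use the fact from Remark \ref{bundle} that the cover is biholomorphic to $\mathbb{C}\times\mathbb{C}$ or $\mathbb{C}\times\mathbb{H}$ (an open subset of $\mathbb{C}^2$) to force the holomorphic lift to be constant. The paper's version is terser—it omits the normalization step and the explicit maximum-modulus argument—but the underlying idea is identical, and your added care (normalizing a possibly singular rational curve, noting that topological contractibility alone would not suffice) only makes the argument more complete.
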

\begin{proof}
If $X$ is an aspherical elliptic curve, then any map $\mathbb{CP}^1 \to X$ lifts to the universal cover since $\mathbb{CP}^1$ is simply connected. As described above, the universal cover is an open subset of $\mathbb{C}^2$, so the lift must be constant.
\end{proof}

There are examples of elliptic surfaces which contain finitely many rational curves, and examples with infinitely many, see \cite[section 5]{BFO23}.

Note, there are elliptic surfaces with Kodaira dimension $-\infty$, but none of them are aspherical (they are either rational, ruled, or Hopf). As for Kodaira dimension $2$, none of them are elliptic.

There are non-aspherical elliptic surfaces in Kodaira dimensions $0$ and $1$. By combining Proposition \ref{asphericalelliptic} with \cite[Lemma 7.2(b)]{Wall86}, such surfaces must be K\"ahler. In Kodaira dimension $0$, such surfaces are the elliptic K3 surfaces and all Enriques surfaces, while for Kodaira dimension $1$, homotopy K3 surfaces and Dolgachev surfaces provide examples. One can construct many more Kodaira dimension $1$ examples as follows (the stated examples arise this way). Choose an elliptic surface with an exceptional fiber which is not a multiple fiber with smooth reduction (equivalently, has positive Euler characteristic). Applying logarithmic transformations decreases the value of $\chi^{\text{orb}}(C)$, so by \cite[Lemma 7.1]{Wall86}, the result will eventually have Kodaira dimension 1.

{\bf Kodaira dimension $2$:} Aspherical surfaces with Kodaira dimension $2$ exist, but as with most problems regarding general type surfaces, we have nothing even close to a classification. Indeed, the list of known aspherical surfaces of general type is not particularly rich even if there are reasons to expect such surfaces exist in great profusion. The list includes ball quotients (e.g., fake projective planes), surfaces isogeneous to product of curves, Kodaira fibrations, Mostow-Siu surfaces, and certain branched covers of ball quotients due to Domingo-Stover \cite[Theorem 1.5]{S22}.  We refer to the paper of Bauer-Catanese \cite{Fabrizio} for more details.  The list of aspherical surfaces of general type also includes the vast majority of smooth minimal toroidal compactifications of ball quotients, see \cite[Theorem A]{tesi}. In all of these examples, when the signature is explicitly computed one has that $\sigma\geq 0$. It seems currently unknown whether or not an aspherical complex surface of general type can have negative signature. In conclusion, we can summarize this discussion into a table. 

\begin{table}[ht]
\centering
\resizebox{\textwidth}{!}{\begin{tabular}{c|c|c|c|c}
$\kappa(X)$ & $b_1(X)$ & {\bf List} & $\chi_{top}(X)$ & $\sigma(X)$\\
\hline
$-\infty$ & odd & \begin{tabular}{c}Inoue-Bombieri\\ Potential Class $\mathrm{VII}_0^+$ examples\end{tabular} & \begin{tabular}{c}$0$\\ Positive\end{tabular} & \begin{tabular}{c} $0$\\ Negative\end{tabular}\\
\hline
$0$ & \begin{tabular}{c}even\\ odd\end{tabular} & \begin{tabular}{c}Tori and quotients\\ All\end{tabular} & \begin{tabular}{c}$0$\\ $0$\end{tabular} & \begin{tabular}{c}$0$\\ $0$\end{tabular}\\
\hline
$1$ & \begin{tabular}{c}even\\ odd\end{tabular} & \begin{tabular}{c}Some\\ All\end{tabular} & \begin{tabular}{c}$0$\\ $0$\end{tabular} & \begin{tabular}{c}$0$\\ $0$\end{tabular}\\
\hline
$2$ & even & Some & Positive & ?
\end{tabular}}
\end{table}

\section{Singer Conjecture for Surfaces with Residually Finite Fundamental Group}\label{Proofs}

In this section, we show that the Singer conjecture holds true for closed aspherical complex surfaces with residually finite fundamental group. The proof we present here does not rely on Gromov's characterization of K\"ahler manifolds with non-vanishing first $L^2$-Betti number \cite{gromov}.  We rely upon the study of the Albanese map given in \cite{DCL19a} and \cite{DL23}, and on L\"uck's approximation theorem which we now briefly recall.

Let $X$ be a manifold with $\Gamma\stackrel{{\rm def}}{=}\pi_1(X)$  residually finite. We consider a sequence of nested, normal, finite index subgroups $\{\Gamma_i\}$ of $\Gamma$ such that $\cap_i\Gamma_i$ is the identity element. This  sequence is usually called a \emph{cofinal filtration} of $\Gamma$. Let $\pi_i\colon X_i\rightarrow X$ be the finite  regular cover of $X$ associated to $\Gamma_i$. L\"uck's approximation theorem \cite{Luck} ensures that
\begin{align}\label{LIMIT}
\lim_{i \to \infty}\frac{b_{k}(X_i)}{\deg \pi_i } \; = \; b^{(2)}_k (X; \widetilde{X}),
\end{align}
where $b_k(X_i)$ denotes the $k^{\text{th}}$ Betti number of $X_i$, and  $b^{(2)}_k (X; \widetilde{X})$ is the $L^2$-Betti number of $X$ computed with respect to the universal cover $\widetilde{X}$. Thus, the limit in \eqref{LIMIT} always exists and it is \emph{independent} of the cofinal filtration. We refer to the ratio $b_k(X_i)/\deg \pi_i$ as the \emph{normalized} $k^{\text{th}}$-Betti number of the cover $\pi_i\colon  X_i \rightarrow X$. Conjecture \ref{Singer} is then equivalent to the sub-degree growth of Betti numbers along a tower of covers associated to a cofinal filtration.

We start with the following proposition that is not limited to complex dimension two.

\begin{prop}\label{Albanese1}
	Let $X$ be an aspherical smooth projective variety. Assume that 
	 $\pi_1(X)$ is residually finite and   there exists a cofinal tower of coverings $\pi_i\colon X_i\to X$ 
	   such that the images $a_{X_i}(X_i)$  
	 of the Albanese maps 
	 are either  points or  curves in $\Alb(X_i)$. We then have
	\[
	\lim_{i \to \infty}\frac{b_{1}(X_i)}{\deg \pi_i } \; = \; 0.
	\]
\end{prop}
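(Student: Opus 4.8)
My plan is to reduce the statement to the vanishing of a single $L^2$-Betti number and then invoke the approximation theorem \eqref{LIMIT}. Writing $\Gamma=\pi_1(X)$ and $\Gamma_i=\pi_1(X_i)$, note first that since $X$ is a closed aspherical manifold of positive dimension, $\Gamma$ is infinite and torsion-free, so $\deg\pi_i=[\Gamma:\Gamma_i]\to\infty$. By \eqref{LIMIT} the limit in question equals $b^{(2)}_1(X;\widetilde X)$, so it suffices to prove $b^{(2)}_1(X;\widetilde X)=0$. For a smooth projective variety one has $b_1=2q=2\dim\Alb$, and the irregularity is non-decreasing along the tower because each covering $X_{i+1}\to X_i$ induces a surjection $\Alb(X_{i+1})\to\Alb(X_i)$. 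Hence either every $a_{X_i}(X_i)$ is a point, in which case $\Alb(X_i)=0$, $b_1(X_i)=0$ for all $i$, and the limit is trivially zero; or some cover $X_{i_0}$ has Albanese image a genuine curve. In the latter case I would use the multiplicativity $b^{(2)}_1(X_{i_0};\widetilde X)=[\Gamma:\Gamma_{i_0}]\,b^{(2)}_1(X;\widetilde X)$ of $L^2$-Betti numbers under finite coverings to reduce everything to showing $b^{(2)}_1(X_{i_0};\widetilde X)=0$ for this one cover.

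So fix $Y=X_{i_0}$ with $a_Y(Y)$ a curve generating $\Alb(Y)$. Since an abelian variety receives no nonconstant map from $\mathbb P^1$, this curve has positive geometric genus, and taking the Stein factorization of $a_Y$ (using the analysis of the Albanese map in \cite{DCL19a, DL23}) I would produce a fibration $f\colon Y\to C$ onto a smooth projective curve with connected fibers, with $g(C)\ge 1$ and $\Alb(Y)\cong J(C)$, whence $b_1(Y)=2g(C)$. The fibration gives the homotopy exact sequence
\[
\pi_1(F)\longrightarrow \pi_1(Y)\longrightarrow \pi_1^{\mathrm{orb}}(C)\longrightarrow 1,
\]
where $F$ is a general smooth fiber and the orbifold structure on $C$ records the multiple fibers. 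Set $N=\ker\big(\pi_1(Y)\to\pi_1^{\mathrm{orb}}(C)\big)$, the image of $\pi_1(F)$. Then $N$ is \emph{finitely generated}, being the image of the finitely generated group $\pi_1(F)$; it is \emph{of infinite index}, as $g(C)\ge 1$ makes $\pi_1^{\mathrm{orb}}(C)$ infinite; and it is \emph{infinite}: since $\pi_1(Y)$ is torsion-free, $N$ is either trivial or infinite, and if $N$ were trivial then $\pi_1(Y)\cong\pi_1^{\mathrm{orb}}(C)$ would have cohomological dimension at most $2$, contradicting $\mathrm{cd}\,\pi_1(Y)=2\dim_{\mathbb C}X\ge 4$.

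Finally I would invoke the vanishing of the first $L^2$-Betti number for an extension $1\to N\to \pi_1(Y)\to Q\to 1$ with $N$ finitely generated and infinite and $Q=\pi_1^{\mathrm{orb}}(C)$ infinite: the Leray--Serre mechanism for $L^2$-cohomology expresses $b^{(2)}_1(\pi_1(Y))$ through $b^{(2)}_0(N)$ and the $Q$-coinvariants of $\overline{H^{(2)}_1(N)}$, both of which carry zero von Neumann dimension because $N$ is infinite (so $b^{(2)}_0(N)=0$) and $Q$ is infinite; when $F$ has amenable fundamental group — as in the surface case, where $F$ is an elliptic curve and $N$ is virtually $\mathbb Z^2$ — this is precisely the Cheeger--Gromov vanishing. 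Combining $b^{(2)}_1(Y;\widetilde X)=0$ with the multiplicativity of the first paragraph gives $b^{(2)}_1(X;\widetilde X)=0$, and \eqref{LIMIT} then yields the claim. The hard part will be the middle step: passing from the hypothesis on the Albanese image to the fibration $f\colon Y\to C$ and, above all, controlling the homotopy exact sequence in the presence of multiple fibers so as to guarantee simultaneously that $N$ is finitely generated, infinite, and of infinite index — this is exactly where the input of \cite{DCL19a, DL23} and the asphericity of $X$ (through the cohomological-dimension bound) are indispensable.
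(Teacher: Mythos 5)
Your proposal is correct, but it takes a genuinely different route from the paper's own proof. The paper never converts the limit into an $L^2$-Betti number: it estimates the normalized first Betti numbers directly along the tower. Concretely, it uses the induced maps $a_{\pi_i}\colon S_i\to S$ between the Albanese curve images, bounds $b_1(X_i)=b_1(S_i)$ by Riemann--Hurwitz (since $\pi_i$ is unramified, the branching of $a_{\pi_i}$ is confined to the finitely many critical values of $a_X$, giving a bound linear in $\deg a_{\pi_i}$ with constants independent of $i$), and then proves the key growth claim that the minimal degree $k_i$ of $\pi_i$ restricted to a general Albanese fiber tends to infinity --- this is where asphericity enters in the paper, through largeness of $\pi_1$ and the Seshadri-constant result \cite[Proposition 3.3]{DD19} --- yielding the explicit decay $b_1(X_i)/\deg\pi_i\leq (2\chi(\omega_S)+C+2)/k_i$. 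You instead use \eqref{LIMIT} to reduce everything to $b^{(2)}_1(X;\widetilde X)=0$, pass by multiplicativity to a single cover $Y$ whose Albanese image is a curve, and kill $b^{(2)}_1(\pi_1(Y))$ using the orbifold homotopy exact sequence of the Albanese fibration together with L\"uck's vanishing theorem for extensions $1\to N\to\Gamma\to Q\to 1$ with $N$ finitely generated infinite and $Q$ infinite \cite{Luck1}; asphericity enters instead through torsion-freeness and the cohomological dimension bound $\mathrm{cd}\,\pi_1(Y)=2\dim_{\mathbb C}X\geq 4$. Your argument is softer and in one respect stronger: it only needs \emph{one} cover in the tower to have Albanese image of dimension $\leq 1$, it avoids the input of \cite{DD19}, and it delivers the stronger conclusion $b^{(2)}_1(X;\widetilde X)=0$ outright; it is also closer in spirit to the arguments the paper itself uses for elliptic surfaces (Cheeger--Gromov/L\"uck vanishing). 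What it gives up is the quantitative content: the paper's proof produces an explicit rate of decay along the tower, not merely the vanishing of the limit.

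Two small points to tighten. First, the extension theorem of L\"uck requires slightly more than ``$Q$ infinite'': in \cite{Luck1} the quotient must contain an element of infinite order (or finite subgroups of unbounded order). This holds in your situation, since $\pi_1^{\mathrm{orb}}(C)$ surjects onto the fundamental group of the underlying closed surface of genus $g\geq 1$, and any preimage of an infinite-order element has infinite order; but it should be said. Second, the Stein factorization detour is unnecessary: when the Albanese image is a curve, it is automatically smooth and $a_Y$ already has connected fibers, with the genus of the image equal to $\frac{1}{2}b_1(Y)$ --- this is exactly the classical fact the paper quotes at the start of its proof --- and the orbifold homotopy exact sequence for a fibration of a projective manifold over a curve, which you use to identify the quotient as $\pi_1^{\mathrm{orb}}(C)$ and to control it when $N$ is trivial, deserves a citation (see e.g. \cite{Toledo}).
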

\begin{pf}
	Clearly, we just need to study the case where $b_{1}(X_i)\neq 0$ from some point on in the cofinal tower. 
	Recall that if  $a_{X_i}(X_i)$ is a curve, it must  be smooth, connected, and its genus equals 
	 $\frac{1}{2}b_{1}(X_i)$. For simplicity sake, from now on we assume that $a_{X_i}(X_i)$ is a curve for any $i\geq 0$ in the cofinal tower. Moreover, we set $S:=a_{X}(X)=a_{X_0}(X_0)$ and $S_i:=a_{X_i}(X_i)$. Due to the universal property of the Albanese variety, there is a map $a_{\pi_i} : S_i \to S$ such that the following diagram commutes:
	\begin{equation}\notag
	\centerline{ \xymatrix@=32pt{
			X_i\ar[d]_{\pi_i} \ar[r]^{a_{X_i}} & S_{i} \ar[d]^{a_{\pi_i}}  \\
			X  \ar[r]^{a_X}  & S.\\}} 
	\noindent 
	\end{equation} 
	Since $\pi_i$ is unramified, for any $i\geq 1$, the branching locus $B_i$ of $a_{\pi_i}$ is contained in the (finite) set of critical values of $a_{X}$.
 In particular, there exists a positive constant $C>0$ such that 
 for all $i$ we have  $ \#( B_i )  \leq C$.	
	Thus the degree of the ramification divisor $R_i$  of $a_{\pi_i}$ is bounded by
	$\deg R_i \leq C\cdot  \deg \big( a_{\pi_i} \big) $ for any $i$.
	 By using the Riemann--Hurwitz formula, we have
	\[
	b_1(S_i) \; = \;  2 \deg \big( a_{\pi_i} \big) \cdot \chi (\omega_S) + \deg R_i +2 
\;	\leq \;  \deg \big( a_{\pi_i} \big) \cdot  ( 2\chi(\omega_S) + C) +2.
	\] 
	Since $b_1(X_i)=b_1(S_i)$, dividing by $\deg \big( a_{\pi_i} \big) > 0$ yields
	\begin{equation}\label{eq:limitc}
	\frac{b_1(X_i)}{\deg(a_{\pi_i})} \leq 2\chi(\omega_S) + C + \frac{2}{\deg a_{\pi_i}} \leq 2\chi(\omega_S) + C + 2.
	\end{equation}
	
Next, 	let $k_i$ be the minimal degree of the restriction of $\pi_i$ to a general fiber   of $a_{X_i}$. 
Note that $\{k_i\}_{i\in\mathbb N}$ is a sequence of non-decreasing positive integers and  
\begin{equation}\label{degreerelation}
\deg \pi_i \geq k_i \, \cdot \, \deg \big( a_{\pi_i} \big) \quad \forall   i.
\end{equation}

	We claim that 
	\begin{equation}\label{limitk}
	\lim_{i\to\infty}k_i \; = \; \infty.
	\end{equation}
	By contradiction,  as the tower of coverings $\pi_i \colon  X_i\to X$ is cofinal, the $X_i$'s converge  to the topological universal cover $\widetilde{X}$   (\emph{cf.} \cite[\S3]{DD19}).
	Now equip the covers $X_i$ with the metrics induced by a fixed K\"ahler metric on the base, 
	given by an ample line bundle $L$ on $X$, via pullback.
Moreover, let $G_i$ be a general fiber of $a_{X_i}$ such that $k_i = \deg \big( \pi_i|G_i \big)$. In this way 
the volume of $G_i$  is computed as 
$$( \pi_i^*L \cdot G_i ) \; = \;  ( L \cdot \pi_{i*}G_i ) \; = \;  k_i \cdot ( L \cdot F) \quad \forall   i.$$
If the $k_i$'s were bounded,  there would exist an integer $N>0$ such that 
$( \pi_i^*L \cdot G_i ) < N$ for all $i$; but this contradicts 
 \cite[Proposition 3.3]{DD19} (note that if $X$ is aspherical, then $\pi_1(X)$ is large,
see   \cite[Proposition 6.7]{LMW17b} and \cite[Proposition 2.12.1]{Ko}).
%
	In conclusion, by \eqref{eq:limitc}, \eqref{degreerelation} and \eqref{limitk} it follows that
	\[
	0 \leq \frac{b_{1}(X_i)}{\deg \pi_i } \; \leq \; \frac{b_1(X_i)}{k_i\deg(a_{\pi_i})} \leq \frac{2\chi(\omega_S) + C + 2}{k_i }.
	\]
Taking the limit as $i \to \infty$ concludes the proof.
%

%
%

\end{pf}

We can now give a proof of Theorem \ref{main3} stated in the introduction.

\begin{pf}[Proof of Theorem \ref{main3}]
Since $b_0^{(2)}(X; \widetilde{X}) = b_4^{(2)}(X; \widetilde{X}) = 0$ and the alternating sum of $L^2$-Betti numbers is equal to $\chi_{top}(X)$, it is enough to show that $b_1^{(2)}(X; \widetilde{X}) =0$ by Poincar\'e duality. 
Moreover, we can assume that $X$ is minimal by the asphericity assumption (see \cite[Lemma 2]{ADC}).
We divide the proof into several steps according 
to the Kodaira dimension (using the results of Section \ref{asphericalS}).

To begin with, suppose   ${\rm Kod}(X)=-\infty$.
In the non-K\"ahler case, $X$ is of class VII. All such surfaces have $b_1 (X) =1$. Moreover any finite   
covering of a surface of class VII is again of class VII (see for example \cite[Proposition II.1.21]{FM} or \cite[Lemma 5.1]{Dur05}).
 Thus, the vanishing of $b^{(2)}_{1}(X; \widetilde{X})$ follows immediately from L\"uck's approximation \cite{Luck}.
 In the K\"ahler  case, no surface with 
${\rm Kod}(X)=-\infty$ is aspherical.


As discussed in Section \ref{asphericalS}, aspherical complex surfaces of Kodaira dimension $0$ are finitely covered by either a torus or a primary Kodaira surface. It follows that the fundamental groups of such surfaces contain a normal subgroup isomorphic to $\mathbb{Z}^2$. Hence, the $L^2$-Betti numbers vanish by a classical result of Cheeger-Gromov \cite[Corollary 0.6]{CG86}, see also \cite[Theorem 1.44]{Luck02}.

The aspherical complex surfaces of Kodaira dimension $1$ are the properly elliptic surfaces with no exceptional fibers other than multiple fibers with smooth reduction. By Remark \ref{bundle}, such surfaces are finitely covered by holomorphic elliptic curve bundle, and hence their fundamental groups also contain a normal subgroup isomorphic to $\mathbb{Z}^2$. Again, this implies that the $L^2$-Betti numbers vanish.

For surfaces of general type, we first recall that they have to be projective (see \cite[p.189]{BHPV04}),
 and we can   use the Albanese map if the surface is irregular (i.e. $b_1(X) \neq 0$).
 Then, we proceed by using L\"uck's approximation \cite{Luck} on a cofinal tower. 
 If none of the covers in the   tower is irregular, then the vanishing of $b^{(2)}_{1}(X; \widetilde{X})$ is immediate and the result follows. In the other cases, we use either Proposition 
 \ref{Albanese1}, or Theorem \cite[Theorem 1.3]{DL23} specialized to complex dimension two.  Recall that in complex dimension two, $a_{X}$ is semismall if and only if it is generically finite onto its image.
\end{pf}

\section{On the Singer Conjecture for Complex Surfaces}\label{SingerS}

In \cite{gromov}, Gromov shows that if $(X, \omega)$ is a closed, K\"ahler manifold with $b^{(2)}_{1}(X;\widetilde{X})\neq 0$, then $\pi_1(X)$ is \emph{commensurable} to the fundamental group of a compact surface of genus $g\geq 2$. For more details about this important result, we refer to \cite{abr} and the nice book \cite[Chapter 4]{Toledo} on K\"ahler groups.

Gromov's theorem implies that no aspherical K\"ahler surface $(X^2, \omega)$ can have non-vanishing $b^{(2)}_1$. Indeed, if this was the case then a finite cover of $X$, say $X^\prime$, would have the same fundamental group as a hyperbolic Riemann surface, say $C$. Since both $X^\prime$ and $C$ are aspherical with isomorphic fundamental groups, they are homotopy equivalent \cite[Theorem 2.1]{Luck2}, which is clearly not possible as $H_{4}(X';\Z)\neq H_{4}(C;\Z)=0$. Let's summarize this discussion into a theorem.

\begin{thm}[Gromov]\label{ksurfaces}
The Singer conjecture is true for closed, aspherical, K\"ahler surfaces.
\end{thm}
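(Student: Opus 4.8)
The plan is to reduce the four-dimensional Singer conjecture to the single vanishing statement $b^{(2)}_1(X;\widetilde{X}) = 0$, and then to extract this vanishing from Gromov's theorem by a homotopy-theoretic contradiction. For the reduction: since $X$ is closed, aspherical, and of positive dimension, its fundamental group is infinite, so $b^{(2)}_0(X;\widetilde{X}) = 0$; by $L^2$-Poincaré duality $b^{(2)}_4 = b^{(2)}_0 = 0$ and $b^{(2)}_3 = b^{(2)}_1$. Atiyah's $L^2$-index theorem \cite{Atiyah} gives $\sum_k (-1)^k b^{(2)}_k(X;\widetilde{X}) = \chi_{top}(X)$, so once $b^{(2)}_1 = 0$ we automatically obtain $b^{(2)}_2 = \chi_{top}(X)$ with all other $L^2$-Betti numbers vanishing, which is exactly the assertion of Conjecture \ref{Singer} for $n=2$.

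For the vanishing itself I would argue by contradiction. Suppose $b^{(2)}_1(X;\widetilde{X}) \neq 0$. Then Gromov's theorem \cite{gromov} on Kähler groups applies: $\pi_1(X)$ is commensurable to $\pi_1(C)$ for some closed Riemann surface $C$ of genus $g \geq 2$. Commensurability means $\pi_1(X)$ and $\pi_1(C)$ contain isomorphic finite-index subgroups; passing to the corresponding finite covers produces a finite cover $X'$ of $X$ and a finite cover $C'$ of $C$ with $\pi_1(X') \cong \pi_1(C')$. Here $C'$ is again a closed hyperbolic Riemann surface, hence aspherical, and $X'$ is aspherical because any finite cover of an aspherical space inherits its contractible universal cover.

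The contradiction then follows from the rigidity of aspherical spaces. Since $X'$ and $C'$ are both aspherical with isomorphic fundamental groups, the given isomorphism is realized by a homotopy equivalence $X' \simeq C'$ (two $K(\pi,1)$'s with isomorphic $\pi_1$ are homotopy equivalent, cf. \cite[Theorem 2.1]{Luck2}). But $X'$ is a closed orientable real $4$-manifold, so $H_4(X';\Z) \cong \Z$, whereas $C'$ is a real surface with $H_4(C';\Z) = 0$; a homotopy equivalence cannot alter top homology, so this is absurd. Hence $b^{(2)}_1(X;\widetilde{X}) = 0$, and the theorem follows.

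I expect essentially all of the genuine content to reside in Gromov's theorem, which I am treating as a black box; everything downstream is a clean formal argument. The only step requiring mild care is the bookkeeping around commensurability — checking that the relevant finite-index subgroup of $\pi_1(C)$ is again the fundamental group of a closed higher-genus surface (true, as finite-index subgroups of surface groups are the surface groups of the covering surfaces), so that both asphericity and the dimension mismatch in top homology are genuinely preserved when passing to $X'$ and $C'$.
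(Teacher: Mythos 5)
Your proof is correct and is essentially the same as the paper's: the paper likewise reduces the statement to showing $b^{(2)}_1(X;\widetilde{X})=0$ (via vanishing of $b^{(2)}_0$, $L^2$-Poincar\'e duality, and the $L^2$-index theorem) and then derives a contradiction from Gromov's commensurability theorem, using that two aspherical spaces with isomorphic fundamental groups are homotopy equivalent while $H_4(X';\Z)\neq H_4(C';\Z)=0$. The only difference is presentational: you spell out the commensurability bookkeeping and the reduction step, which the paper treats more tersely.
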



We can now combine some parts of the proof of Theorem \ref{main3} with Theorem \ref{ksurfaces} to prove the following.

\begin{thm}\label{csurfaces}
The Singer conjecture is true for closed, aspherical, complex surfaces that are not in Class $\mathrm{VII}_0^+$.
\end{thm}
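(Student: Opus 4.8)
The plan is to run through the Kodaira--Enriques classification of Section~\ref{asphericalS}, as in the proof of Theorem~\ref{main3}, and to reduce (by Poincar\'e duality and Atiyah's $L^2$-index theorem) to proving $b^{(2)}_1(X;\widetilde{X})=0$. The crucial point is that the Kodaira dimension $0$ and $1$ cases of Theorem~\ref{main3} were settled using only the existence of a normal $\mathbb{Z}^2\subset\pi_1$ (after passing to a finite cover, via Remark~\ref{bundle}) together with the Cheeger--Gromov vanishing theorem \cite[Corollary 0.6]{CG86}; that argument never used residual finiteness and so applies verbatim here. Residual finiteness entered the proof of Theorem~\ref{main3} in exactly two places, both through L\"uck's approximation theorem \cite{Luck}: the non-K\"ahler Kodaira dimension $-\infty$ (Class~$\mathrm{VII}$) case, and the general type case. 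These are the two steps I must redo without any hypothesis on $\pi_1$.

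For the general type case I would simply observe that such surfaces are projective, hence K\"ahler, so that Theorem~\ref{ksurfaces} applies and yields the full Singer conjecture with no assumption on the fundamental group; this is where Gromov's theory of K\"ahler groups does the heavy lifting that the Albanese-map argument of Theorem~\ref{main3} was doing under the residual finiteness hypothesis.

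For the non-K\"ahler Kodaira dimension $-\infty$ case, excluding Class~$\mathrm{VII}_0^+$ leaves only the Inoue--Bombieri surfaces, which satisfy $\chi_{top}=\sigma=0$; for these the Singer conjecture amounts to the vanishing of every $L^2$-Betti number. Here I would replace the L\"uck-approximation argument by a direct appeal to Cheeger--Gromov: the fundamental group of an Inoue--Bombieri surface is solvable (in fact polycyclic), hence an infinite amenable group, and is therefore an infinite normal amenable subgroup of itself, so \cite[Corollary 0.6]{CG86} forces $b^{(2)}_k(X;\widetilde{X})=0$ for all $k$, which is exactly the desired conclusion.

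The step I expect to be doing the real work is the general type case, but its difficulty is entirely packaged inside Theorem~\ref{ksurfaces}, i.e. inside Gromov's characterization of K\"ahler groups with non-vanishing first $L^2$-Betti number. The true limitation of this strategy---and the reason Class~$\mathrm{VII}_0^+$ must be excluded---is that such a surface would be non-K\"ahler, so Theorem~\ref{ksurfaces} does not apply, while its fundamental group is not known to be amenable, to contain an infinite normal amenable subgroup, or even to be residually finite; hence neither Cheeger--Gromov nor L\"uck approximation is available. Closing this single gap is precisely what a positive answer to the global spherical shell conjecture would accomplish.
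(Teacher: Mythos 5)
Your proposal is correct and follows essentially the same route as the paper's proof: Gromov's theorem (Theorem~\ref{ksurfaces}) disposes of all K\"ahler cases including general type, the Inoue--Bombieri surfaces are handled by amenability of their solvable fundamental groups, and the remaining (elliptic, Kodaira dimension $0$ or $1$) cases follow from the normal $\mathbb{Z}^2\subset\pi_1$ of Remark~\ref{bundle} together with Cheeger--Gromov, none of which requires residual finiteness. The only cosmetic difference is the citation for the amenable case (the paper invokes \cite[Corollary 6.75]{Luck02} rather than viewing the group as an infinite normal amenable subgroup of itself), which is immaterial.
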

\begin{pf}
By Theorem \ref{ksurfaces}, we only need to check the non-K\"ahler case. First note that all Inoue-Bombieri surfaces have solvable fundamental group -- in fact, they are all solvmanifolds, see \cite[Theorem 1]{Has}. Since solvable groups are amenable, it follows that the $L^2$-Betti numbers vanish \cite[Corollary 6.75]{Luck02}. This leaves only the minimal non-K\"ahler surfaces of Kodaira dimension $0$ and $1$, all of which are elliptic. As in the proof of Theorem \ref{main3}, it follows from Remark \ref{bundle} that the fundamental group of such a surface contains a normal subgroup isomorphic to $\mathbb{Z}^2$. In this case, the $L^2$-Betti numbers vanish by a classical result of Cheeger-Gromov \cite[Corollary 0.6]{CG86}, see also L\"uck's \cite[Theorem 4.1]{Luck1} and \cite[Theorem 1.44]{Luck02}.
\end{pf}
%
%
%

Note, if one could show that the fundamental group of a Class $\mathrm{VII}_0^+$ surface was residually finite, then we could apply the argument in the proof of Theorem \ref{main3} to extend Theorem \ref{csurfaces} to all complex surfaces.

The big elephant in the room. Aspherical Class $\mathrm{VII}_0^+$ surfaces conjecturally do not exist. That said, their cohomological structure seems somewhat simple. This motivates the following.

\begin{qtn}
Assume there are aspherical Class $\mathrm{VII}^+_0$ surfaces. Can we prove the Singer conjecture holds for them?
\end{qtn}


\section{Reid's Conjecture and Gromov-L\"uck Inequality}

In this section, we prove Theorem \ref{Ginequality}.


By the discussion in Section \ref{asphericalS}, the only aspherical surfaces with Kodaira dimension $-\infty$ are Inoue-Bombieri surfaces and potential aspherical class $\mathrm{VII}_0^+$ surfaces. As we have seen, the former satisfy $\chi_{top}(X) = \sigma(X) = 0$, while the latter satisfy $\chi_{top}(X) = -\sigma(X) = b_2(X) > 0$. On the other hand, the aspherical surfaces of Kodaira dimension $0$ or $1$ all have $\chi_{top}(X) = \sigma(X) = 0$. This only leaves surfaces of general type.

Note, the Bogomolov-Miyaoka-Yau inequality states that for a general type surface $X$ we have $\chi_{top}(X) \geq 3\sigma(X)$. However, it is not true that $\chi_{top}(X) \geq 3|\sigma(X)|$ for every such $X$. For example, let $X_d$ be a smooth degree $d$ hypersurface of $\mathbb{CP}^3$. Note that $X_d$ is a surface of general type for $d \geq 5$, and a simple characteristic class argument shows that $\chi_{top}(X_d) = d^3 - 4d^2 + 6d$ and $\sigma(X_d) = -\frac{1}{3}(d-2)d(d+2)$. So, for example, we have $\chi_{top}(X_5) = 55$ and $\sigma(X_5) = -35$ so $3|\sigma(X_5)| = 105 > 55 = \chi_{top}(X_5)$. In fact, the proposed inequality is violated by $X_d$ for all $d \geq 5$ (also $d = 3, 4$, but these are not surfaces of general type). Of course, none of these examples are aspherical since they are simply connected by the Lefschetz hyperplane theorem (see for example \cite[Theorem 3.1.17]{Laz}).

If the signature is non-negative, then of course $\chi_{top}(X) \geq 3\sigma(X)$ is equivalent to $\chi_{top}(X) \geq 3|\sigma(X)|$. The discrepancy occurs, as in the examples above, when the signature is negative.

\begin{qtn}\label{negsign} Does there exist an aspherical complex surface with negative signature? (Either has to be a counterexample to the global spherical shell conjecture or a surface of general type).\end{qtn}

This question is yet to be answered, so we continue on our quest to find an inequality relating $\chi_{top}(X)$ and $|\sigma(X)|$. To do so, we need to recall the circle of ideas related to Reid's conjecture, see for example \cite[Chapter VII]{BHPV04}. We also refer to the beautiful survey \cite{LP12} of Mendes Lopes-Pardini on the geography of irregular surfaces for much more on this fascinating topic.

\begin{conj}[Reid]\label{Reid}
Let $X$ be a minimal surface of general type such that $K^{2}_{X}<4\chi_{hol}$, where $\chi_{hol}$ is the holomorphic Euler characteristic. Then $\pi_1(X)$ is either finite, or it is commensurable with the fundamental group of a curve.
\end{conj}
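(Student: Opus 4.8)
The plan is to recast Reid's conjecture as a \emph{Severi-type inequality for the fundamental group} and to attack it through the $\gamma$-reduction (Shafarevich) map. First I would record two preliminary reductions. The hypothesis is stable under étale covers: for a connected degree-$d$ étale cover $X' \to X$ one has $K_{X'}^2 = dK_X^2$ and $\chi(\mathcal{O}_{X'}) = d\,\chi(\mathcal{O}_X)$, so $K_{X'}^2 < 4\chi(\mathcal{O}_{X'})$ again holds; since the conclusion (finite, or commensurable with a curve group) is itself a commensurability invariant, I am free to replace $X$ by any finite étale cover. Moreover, by Noether's formula the hypothesis reads $\chi_{top}(X) > 8\chi(\mathcal{O}_X)$, equivalently $\sigma(X) = K_X^2 - 8\chi(\mathcal{O}_X) < -4\chi(\mathcal{O}_X) < 0$, so the whole conjecture lives in the negative-signature regime of Question \ref{negsign}.

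Next I would introduce the $\gamma$-reduction of Campana and Koll\'ar: an almost holomorphic map $\gamma_X \colon X \dashrightarrow \Gamma(X)$ whose general fibre $F$ is characterised by the image of $\pi_1(F) \to \pi_1(X)$ being finite, and maximal with this property. Its dimension $\gamma(X) = \dim \Gamma(X)$ stratifies the problem: $\gamma(X) = 0$ is equivalent to $\pi_1(X)$ finite, while $\gamma(X) = 1$ forces $\pi_1(X)$ to be commensurable with the orbifold fundamental group of the base curve $\Gamma(X)$, hence with a genuine curve group. Thus Reid's conjecture is \emph{equivalent} to the implication $\gamma(X) = 2 \Rightarrow K_X^2 \geq 4\chi(\mathcal{O}_X)$; that is, a surface whose fundamental group is genuinely two-dimensional must lie on or above the Severi line.

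For the accessible part of this implication I would use the Albanese map as a first approximation to $\gamma_X$. If $X$ is irregular and its Albanese map is generically finite onto its image (so the image is a surface), then Pardini's proof of the Severi inequality yields $K_X^2 \geq 4\chi(\mathcal{O}_X)$ directly; in particular, when $K_X^2 < 4\chi(\mathcal{O}_X)$ and $q(X) \geq 2$ the Albanese image is forced to be a curve, giving (after Stein factorisation) a fibration $f \colon X \to B$ over a smooth curve of genus $q(X)$. I would then feed this fibration into the slope inequality $K_{X/B}^2 \geq (4 - 4/g)\deg f_*\omega_{X/B}$, with $g$ the fibre genus, combined with $K_X^2 < 4\chi(\mathcal{O}_X)$, to pin down the relative invariants and analyse how much of $\pi_1(F)$ survives in $\pi_1(X)$; the goal is to show the vertical image is finite, so that $\gamma(X) = 1$ and $\pi_1(X)$ is commensurable with $\pi_1(B)$.

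The main obstacle is precisely the remaining core of the $\gamma(X) = 2$ case, where $\pi_1(X)$ is large but invisible to the Albanese — for instance the regular case $q(X) = 0$, which includes the ball-quotient phenomena saturating $K^2 = 9\chi(\mathcal{O}_X)$ that the hypothesis must exclude. Here one cannot substitute the Albanese torus for $\Gamma(X)$, and what is needed is a Severi inequality governed by the Shafarevich/$\gamma$-map rather than by an abelian variety. I expect this to require the holomorphic convexity of the universal cover (the Shafarevich conjecture) to produce an honest morphism in place of $\gamma_X$, together with a slope/positivity estimate for that morphism extending Pardini's covering-trick argument to the non-abelian setting; alternatively, an $L^2$-theoretic input in the spirit of Gromov's theorem, manufacturing enough harmonic objects from a two-dimensional $\gamma$-reduction to force $K_X^2 \geq 4\chi(\mathcal{O}_X)$. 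Establishing this non-abelian Severi inequality is the genuinely open heart of the conjecture, and is where I would expect the argument to stall.
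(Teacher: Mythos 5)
The first thing to say is that there is no proof in the paper to compare yours against: the statement you were given is stated (and used) in the paper strictly as a \emph{conjecture}. The paper never proves Conjecture \ref{Reid}; it only invokes the theorem of Horikawa and Reid that the conclusion holds under the stronger hypothesis $K_X^2 < 3\chi_{hol}$, and that weaker result is all that Proposition \ref{areid} and Theorem \ref{Ginequality} actually need. So a correct review of your attempt is not a comparison of methods but an assessment of whether you have closed an open problem --- and, as you yourself say in the final paragraph, you have not: the case where the Shafarevich/$\gamma$-dimension is $2$ (in particular the regular case $q(X)=0$, where ball quotients saturating $K^2 = 9\chi$ show the bound $4\chi$ cannot come cheaply) is precisely the open heart of the conjecture, and your proposal explicitly stalls there. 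Your preliminary reductions are accurate and worth keeping: stability of the hypothesis under finite \'etale covers, the signature reformulation $\sigma(X) = K_X^2 - 8\chi(\mathcal{O}_X) < -4\chi(\mathcal{O}_X) < 0$ via Noether's formula (which correctly places the conjecture in the negative-signature regime of Question \ref{negsign}), and the use of Pardini's Severi inequality to force Albanese dimension $\leq 1$ when $K_X^2 < 4\chi(\mathcal{O}_X)$.

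Beyond the admitted open core, note that even your ``accessible'' irregular branch is not actually closed as written. After producing the Albanese fibration $f \colon X \to B$, you state as a \emph{goal} that the image of $\pi_1(F) \to \pi_1(X)$ is finite, but you give no mechanism for it: the slope inequality $K_{X/B}^2 \geq (4-4/g)\deg f_*\omega_{X/B}$ combined with $K_X^2 < 4\chi(\mathcal{O}_X)$ yields only numerical constraints on $(g, b, \deg f_*\omega_{X/B})$, and numerical geography does not by itself convert into group-theoretic finiteness of the vertical image --- that step would require a genuinely separate input (results in the spirit of Xiao on fundamental groups of fibrations of low slope), and one must also handle multiple fibers, i.e.\ pass from $\pi_1(B)$ to $\pi_1^{\mathrm{orb}}$ of the base orbifold before concluding commensurability with a curve group. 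So the honest summary is: your framework (recasting Conjecture \ref{Reid} as ``$\gamma(X)=2 \Rightarrow K_X^2 \geq 4\chi(\mathcal{O}_X)$,'' a non-abelian Severi inequality) is a reasonable articulation of why the problem is hard and is consistent with the known partial results, but it is a research program, not a proof, and the paper claims nothing more either.
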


As shown by Horikawa \cite{Hor76}  and Reid \cite{Rei79}, Conjecture \ref{Reid} holds true under the stronger assumption $K^{2}_{X}<3\chi_{hol}$. We therefore can observe the following.

\begin{prop}\label{areid}
Let $X$ be an aspherical surface of general type. We then have $K^{2}_{X}\geq 3\chi_{hol}$.
\end{prop}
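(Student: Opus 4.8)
The plan is to argue by contradiction, invoking the case of Reid's conjecture established by Horikawa and Reid. Suppose $X$ were an aspherical surface of general type with $K^2_X < 3\chi_{hol}$. Asphericity forces $X$ to be minimal by \cite[Lemma 2]{ADC}, so $X$ is a minimal surface of general type lying in the range $K^2_X < 3\chi_{hol}$ where Conjecture \ref{Reid} is known by \cite{Hor76, Rei79}. Consequently $\pi_1(X)$ is either finite or commensurable with the fundamental group of a curve, and it remains to show that asphericity excludes both alternatives.

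First I would rule out the finite case. A closed aspherical manifold has torsion-free fundamental group, since a nontrivial finite subgroup would admit a finite-dimensional classifying space, contradicting that finite groups have nonzero cohomology in arbitrarily high degrees. Hence $\pi_1(X)$ finite forces $\pi_1(X)$ trivial, making $X$ simply connected and aspherical, i.e.\ contractible; this is impossible for a closed oriented $4$-manifold because $H_4(X;\Z) = \Z \neq 0$.

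For the second case I would reproduce the argument preceding Theorem \ref{ksurfaces}. Commensurability gives a finite-index subgroup $H \leq \pi_1(X)$ isomorphic to a finite-index subgroup of $\pi_1(C)$; since finite-index subgroups of surface groups are again surface groups, $H \cong \pi_1(C')$ for some curve $C'$. The finite cover $X' \to X$ corresponding to $H$ is again aspherical (finite covers of aspherical spaces are aspherical), with $\pi_1(X') \cong \pi_1(C')$. As $X'$ and $C'$ are both aspherical with isomorphic fundamental groups, they are homotopy equivalent by \cite[Theorem 2.1]{Luck2}, which is absurd since $H_4(X';\Z) = \Z$ while $H_4(C';\Z) = 0$. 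This contradiction completes the argument.

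Since the proof is a short deduction from known inputs, there is no analytic difficulty; the point requiring the most care is that the hypothesis $K^2_X < 3\chi_{hol}$ --- rather than the weaker $K^2_X < 4\chi_{hol}$ of the full Reid conjecture --- is exactly what makes the Horikawa--Reid theorem applicable, and that the single $H_4$ obstruction disposes of both alternatives in its conclusion uniformly.
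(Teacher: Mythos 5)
Your proof is correct and takes essentially the same approach as the paper: minimality via \cite[Lemma 2]{ADC}, the Horikawa--Reid theorem in the range $K^2_X < 3\chi_{hol}$, and exclusion of both alternatives in Reid's conjecture using asphericity. The only cosmetic difference is that you prove the two exclusions directly (torsion-freeness of $\pi_1$ for the finite case, the $H_4$ obstruction via \cite[Theorem 2.1]{Luck2} for the commensurable case), whereas the paper cites \cite[Lemma 4.1]{Luck2} for the first and points to its Section \ref{SingerS}, which contains exactly the argument you reproduce, for the second.
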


\begin{pf}
By \cite[Lemma 2]{ADC}, $X$ must be minimal. Now an aspherical surface must have infinite $\pi_1$, see for example \cite[Lemma 4.1]{Luck2}. As was observed at the beginning of Section \ref{SingerS}, $\pi_1(X)$ cannot be commensurable with the fundamental group of a curve. Since Conjecture \ref{Reid} holds true for minimal surfaces of general type satisfying $K^{2}_{X}<3\chi_{hol}$, we conclude that
\[
K^{2}_{X}\geq 3\chi_{hol},
\]
for any aspherical surface of general type.
\end{pf}

We can now prove the desired inequality relating $\chi_{top}(X)$ and $|\sigma(X)|$ for aspherical general type surfaces.

\begin{lem}
Let $X$ be an aspherical surface of general type. We then have:
\[
\chi_{top}(X)\geq \frac{9}{5}|\sigma|.
\]
\end{lem}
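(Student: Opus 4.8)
The plan is to convert the inequality into a statement purely about the two Chern numbers $c_1^2 = K_X^2$ and $c_2 = \chi_{top}(X)$, and then to recognize it as a two-sided bound on the ratio $c_1^2/c_2$. First I would record the two standard identities for a compact complex surface coming from Noether's formula and the Hirzebruch signature theorem, namely $\chi_{hol} = \tfrac{1}{12}(c_1^2 + c_2)$ and $\sigma(X) = \tfrac{1}{3}(c_1^2 - 2c_2)$. Since $X$ is an aspherical surface of general type, it is minimal by \cite[Lemma 2]{ADC} and satisfies $c_2 = \chi_{top}(X) > 0$; this positivity will be used below to compare multiples of $c_2$.

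Next I would assemble the two bounds that pin the ratio $c_1^2/c_2$ from both sides. The upper bound is the Bogomolov-Miyaoka-Yau inequality $c_1^2 \leq 3c_2$ (equivalently $\chi_{top}(X) \geq 3\sigma(X)$), valid for any surface of general type. The lower bound is exactly the content of Proposition \ref{areid}: substituting Noether's formula into $K_X^2 \geq 3\chi_{hol}$ and clearing denominators gives $3c_1^2 \geq c_2$, i.e. $c_1^2 \geq \tfrac{1}{3}c_2$. With both bounds in hand I would split on the sign of $\sigma(X)$. If $\sigma(X) \geq 0$ then $c_1^2 \geq 2c_2$ and $\tfrac{9}{5}|\sigma(X)| = \tfrac{3}{5}(c_1^2 - 2c_2) \leq \tfrac{3}{5}(3c_2 - 2c_2) = \tfrac{3}{5}c_2 \leq c_2 = \chi_{top}(X)$ using BMY; if $\sigma(X) < 0$ then $c_1^2 < 2c_2$ and $\tfrac{9}{5}|\sigma(X)| = \tfrac{3}{5}(2c_2 - c_1^2) \leq \tfrac{3}{5}\bigl(2c_2 - \tfrac{1}{3}c_2\bigr) = c_2 = \chi_{top}(X)$ using the Reid bound from Proposition \ref{areid}.

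I do not expect a genuine obstacle here: once everything is phrased in $c_1^2$ and $c_2$, the argument is a one-line linear estimate in each case. The only conceptual point worth flagging is that the two cases are governed by different inputs — the positive-signature case is controlled by BMY alone (which in fact yields the stronger $\chi_{top}(X) \geq 3\sigma(X)$ there), whereas the negative-signature case is the binding one and is controlled precisely by the Reid-type bound $c_1^2 \geq \tfrac{1}{3}c_2$. Inspecting the extremal configurations $c_1^2 = 3c_2$ (for $\sigma \geq 0$) and $c_1^2 = \tfrac{1}{3}c_2$ (for $\sigma < 0$) shows that $9/5$ is the optimal constant obtainable from these two bounds, with the value approached exactly along $c_1^2 = \tfrac{1}{3}c_2$. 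Thus essentially the entire weight of the lemma is already carried by Proposition \ref{areid}, and the remainder is bookkeeping with Noether's formula and the signature theorem.
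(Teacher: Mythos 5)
Your proof is correct and is essentially the paper's own argument in different coordinates: the paper substitutes the identities $K_X^2 = 2\chi_{top}(X) + 3\sigma(X)$ and $\chi_{hol}(X) = \tfrac{1}{4}\left(\chi_{top}(X)+\sigma(X)\right)$ directly into Proposition \ref{areid} to obtain $\chi_{top}(X) \geq \tfrac{9}{5}(-\sigma(X))$, and then invokes the Bogomolov--Miyaoka--Yau inequality to cover the case $\sigma(X) \geq 0$, which is precisely your two-case estimate rewritten in terms of $(c_1^2, c_2)$. Your closing observation that $9/5$ is the best constant extractable from these two inputs is a nice addition but not needed for the lemma.
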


\begin{pf}
Recall that
\[
K^{2}_{X}=2\chi_{top}(X)+3\sigma(X), \quad \chi_{hol}(X)=\frac{\chi_{top}(X)+\sigma(X)}{4}.
\]
By using Proposition \ref{areid}, we obtain
\[
\chi_{top}(X)\geq \frac{9}{5}(-\sigma(X)),
\]
which, combined with the Bogomolov-Miyaoka-Yau inequality, gives $\chi_{top}(X)\geq \frac{9}{5}|\sigma(X)|$.
\end{pf}

This completes the proof of Theorem \ref{Ginequality}.

Note that for a minimal surface of general type $X$, we have $c_1^2(X) > 0$ from which it follows that $\chi_{top}(X) > \frac{3}{2}(-\sigma(X))$. If the signature is negative, the inequality $\chi_{top}(X) \geq \frac{9}{5}(-\sigma(X))$ is stronger. For example, if $\sigma(X) = -3$, the former inequality implies $\chi_{top}(X) \geq 5$ while the latter implies $\chi_{top}(X) \geq 6$.

\begin{rem}Reid's conjecture implies the slightly better bound $\chi_{top}(X) \geq 2|\sigma(X)|$.\end{rem}

Note that we actually have $\chi_{top}(X) \geq \frac{9}{5}|\sigma(X)|$ for all aspherical complex surfaces, except any potential Class $\mathrm{VII}_0^+$ examples (in all other cases, $\chi_{top}(X) = \sigma(X) = 0$).

\begin{rem}
Very recently, in \cite[Conjecture 1.2]{AMW}, Arapura, Maxim and Wang stated a Hodge-theoretic version of the Singer-Hopf conjecture: \emph{If $X$ is a compact K\"ahler manifold of dimension 
$n$ which is either aspherical or it has nef cotangent bundle, then} 
\begin{equation}\label{eq:AMW}
(-1)^{n-p} \chi(\Omega_X^p)  \geq 0\quad \quad  \mbox{\emph{for \, all}} \quad p=0, \ldots ,n.
\end{equation}
Here $\Omega_X^p$ denotes the bundle of holomorphic $p$-forms, and
 $$\chi(\Omega_X^p)=\sum_{i=0}^{n} (-1)^i \dim H^i(X, \Omega_X^p)$$ is the associated Euler characteristic.
The conjecture is verified by the same authors in the case of   surfaces with nef cotangent bundle (\emph{cf. loc. cit.} Proposition 2.4). Moreover, 
by following  \cite{JK93}, it also holds for aspherical complex surfaces as 
$$\chi(\omega_X) =\chi(\sO_X) = \frac{\chi_{ top}(X) + \sigma(X)}{4} \quad \mbox{and}\quad 
\chi(\Omega^1_X)=\frac{\sigma(X)-\chi_{top}(X)}{2}.$$ In higher dimension, 
the conjecture holds for K\"ahler hyperbolic and K\"ahler nonelliptic manifolds (see \cite{Gro}
and \cite{JZ}). As an application of  the inequality $\chi_{top}(X) \geq \frac{9}{5} |\sigma(X)|$ of 
Theorem \ref{Ginequality}, we observe that the inequalities in \eqref{eq:AMW} are actually strict for all 
complex aspherical surfaces of general type. More precisely, as aspherical surfaces are minimal and $\chi_{top}(X)>0$ for all minimal surfaces of general type, if $\sigma(X)\neq 0$ we have
\begin{align}\notag
\chi(\omega_X) =\chi(\sO_X) \geq \frac{1}{5}|\sigma(X)|>0 \quad \mbox{and}\quad 
-\chi(\Omega^1_X)\geq \frac{2}{5}|\sigma(X)|>0,
\end{align}
while if $\sigma(X)=0$ we clearly obtain
\begin{align}\notag
\chi(\omega_X) =\chi(\sO_X) = \frac{\chi_{ top}(X)}{4}>0 \quad \mbox{and}\quad 
-\chi(\Omega^1_X)=\frac{\chi_{top}(X)}{2}>0.\\ \notag
\end{align}
\end{rem}

It is tantalizing to ask what is the optimal constant $a>0$, such that $\chi_{top}(X)\geq a|\sigma(X)|$ for all aspherical surfaces of general type. As remarked above, we currently seem not to know any example of aspherical surfaces of general type with negative signature. If this is not an accident simply due to our lack of good examples, but a true fact of nature, by using the Bogomolov-Miyaoka-Yau inequality we would have
\[
\chi_{top}(X)\geq 3\sigma(X)\geq 0
\]
where the first inequality is saturated if and only if $X$ is a ball quotient. Notice that given a minimal surface of general type $X$ with $\sigma(X)> 0$, the reversed oriented $4$-manifold $\overline{X}$ can never admit a complex structure compatible with the orientation. This follows from Seiberg-Witten theory, see Theorem 2 in \cite{Kot}. Thus, in order to give a positive answer to Question \ref{negsign}, a genuinely new example of a surface of general type would need to be constructed, or alternatively one would need to provide an aspherical counterexample to the global spherical shell conjecture!






%
%
%
%
%
%
%

\end{document}